\title[An Exercise (?) in Fourier Analysis]{An Exercise (?) in Fourier Analysis on the Heisenberg Group}
\author[Bump]{Daniel Bump}\thanks{The first three authors  can be contacted at: Department of Mathematics, Stanford University, 450 Serra Mall, Bldg. 380, Stanford,
CA 94305-2125, USA}\author[Diaconis]{Persi Diaconis}\author[Hicks]{ Angela Hicks}\author[Miclo]{ Laurent Miclo} \author[Widom]{ Harold Widom}\thanks{The fourth author can be contacted at:  Institut de Math\'{e}matiques de Toulouse, Universit\'{e} Paul Sabatier, 118, route de Narbonne, F-31062 Toulouse {Cedex 9}, France.} \thanks{The fifth author can be contacted at: UC Santa Cruz, Department of Mathematics, Santa Cruz, CA 95064, USA} 
\newcommand{\nZ}{{\mathbb{Z}}/{n\mathbb{Z}}}
\newtheorem{proposition}{Proposition}
\newtheorem{theorem}{Theorem}
\theoremstyle{definition}
\newtheorem*{remark}{Remark}
\newtheorem{example}{Example}
\newtheorem*{acknowledgment}{Acknowledgment}
\begin{document}

\begin{abstract}
Let $H(n)$ be the group of $3\times 3$ uni-uppertriangular matrices with entries in ${\mathbb{Z}}/{n\mathbb{Z}}$, the integers mod $n$.  We show that the simple random walk converges to the uniform distribution in order $n^2$ steps.  The argument uses Fourier analysis and is surprisingly challenging.  It introduces novel techniques for bounding the spectrum which are useful for a variety of walks on a variety of groups.
\end{abstract}
\maketitle
\begin{center}\today\end{center}
\section{Introduction}\label{PS1}  For a positive integer $n$, let $H(n)$ be the ``Heisenberg group mod $n$," the group of $3\times 3$ uni-uppertriangular matrices with entries in $\nZ$:
$$\begin{pmatrix}1&x&z\\0&1&y\\0&0&1\end{pmatrix}\hspace{1cm} x,y,z\in \nZ.
$$

Write such a matrix as $(x,y,z)$, so $$(x,y,z)(x',y',z')=(x+x',y+y',z+z'+xy').$$  A minimal symmetric generating set is \begin{align}S=\{(1,0,0),(-1,0,0),(0,1,0),(0,-1,0)\}\end{align}\label{P1.1}
A random walk proceeds, starting at the identity $(0,0,0)$, by repeatedly choosing an element from $S$ with probability $\frac{1}{4}$ and multiplying.  More formally, define
\begin{align}\label{P1.2}Q(g)=\begin{cases}\frac{1}{4}& g\in S\\0&\text{ otherwise}\end{cases}.
\end{align}
For $g\in H(n)$, $$Q*Q(g)=\sum_h Q(h) Q(gh^{-1}), \,\,\, Q^{*k}(g)=\sum_h Q(h)Q^{*(k-1)}(gh^{-1}).$$
If $n$ is odd, high convolution powers of $Q$ converge to the uniform distribution $U(g)=\frac{1}{n^3}$, $Q^{*k}(g)\rightarrow U(g)$.  Measure the speed of convergence by the total variation distance
$$\|Q^{*k}-U\|=\max_{A\subseteq H}|Q^{*k}(A)-U(A)|=\frac{1}{2}\sum_{g\in H} |Q^{*k}(g)-U(g)|.$$
One purpose of this paper is to give a new proof of the following theorem.
\begin{theorem}\label{PT1}
For $Q$ defined by (\ref{P1.2}) on $H(n)$, there are universal positive constants $A$ and $C$ such that 
$$Ae^{\frac{-2\pi^2k}{n^2}}\leq\|Q^{*k}-U\|_{TV}\leq Ce^{\frac{-2\pi^2k}{n^2}}$$
for all odd $n$ and all $k$.
\end{theorem}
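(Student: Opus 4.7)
The plan is to attack the problem by Fourier analysis on $H(n)$, using the Diaconis--Shahshahani upper bound lemma for the upper bound and a character test for the lower bound. For $n$ odd the unitary dual of $H(n)$ consists of (i) the $n^{2}$ one-dimensional characters $\chi_{a,b}(x,y,z)=\omega^{ax+by}$ (with $\omega:=e^{2\pi i/n}$ and $(a,b)\in(\nZ)^{2}$) factoring through the abelianization, and (ii) a family of $n$-dimensional Schr\"odinger representations $\pi_{t}$ indexed by the nontrivial central characters $(0,0,z)\mapsto\omega^{tz}$, realized on $\ell^{2}(\nZ)$ by $(\pi_{t}(x,y,z)f)(j)=\omega^{t(z+jy)}f(j+x)$. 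A short computation on the generators in $S$ gives
\begin{align*}
\hat Q(\chi_{a,b}) &=\tfrac12\bigl(\cos(2\pi a/n)+\cos(2\pi b/n)\bigr),\\
(M_{t}f)(j) &:=(\hat Q(\pi_{t})f)(j)=\tfrac14\bigl(f(j+1)+f(j-1)\bigr)+\tfrac12\cos(2\pi tj/n)\,f(j),
\end{align*}
so $M_{t}$ is a finite discrete almost-Mathieu (Harper) operator on $\ell^{2}(\nZ)$. Since $Q$ is symmetric, each $\hat Q(\pi_{t})$ is self-adjoint and the upper bound lemma reads
$$4\,\|Q^{*k}-U\|_{TV}^{2}\ \le\ \sum_{(a,b)\neq(0,0)}\hat Q(\chi_{a,b})^{2k}\ +\ \sum_{t\neq 0}n\,\mathrm{tr}(M_{t}^{2k}).$$

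\textbf{Abelian upper bound and the lower bound.} The abelian sum is driven by the four characters $\chi_{\pm1,0},\chi_{0,\pm1}$, each with $\hat Q=\cos^{2}(\pi/n)=1-\pi^{2}/n^{2}+O(n^{-4})$ and so contributing $(4+o(1))e^{-2\pi^{2}k/n^{2}}$; a Riemann-sum / Laplace-type majorization against $\iint_{[-\pi,\pi]^{2}}(\tfrac12\cos u+\tfrac12\cos v)^{2k}\,du\,dv$ then controls the full abelian sum by $C_{1}e^{-2\pi^{2}k/n^{2}}$ uniformly in $n,k$ (the regime $k\lesssim n^{2}$ being covered by $\|\cdot\|_{TV}\le1$ and a suitably enlarged $C$). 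For the matching lower bound, testing against $\chi_{1,0}$ already suffices: since $|\chi_{1,0}|\equiv 1$,
$$\|Q^{*k}-U\|_{TV}\ \ge\ \tfrac12\bigl|\hat Q(\chi_{1,0})\bigr|^{k}=\tfrac12\cos^{2k}(\pi/n)\ \ge\ \tfrac12 e^{-2\pi^{2}k/n^{2}},$$
the last inequality being $\cos^{2}(\pi/n)\ge e^{-2\pi^{2}/n^{2}}$, which is easily verified from the Taylor expansion for every odd $n\ge 3$; the Schr\"odinger representations play no role in this half of the argument.

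\textbf{Schr\"odinger upper bound: the crux.} What remains is to show $\sum_{t\neq 0}n\,\mathrm{tr}(M_{t}^{2k})$ is at most a constant multiple of $e^{-2\pi^{2}k/n^{2}}$, and this is where I expect most of the work to lie. The essential input is a uniform operator-norm gap $\|M_{t}\|\le 1-c/n$ with $c>0$ independent of $t\ne 0$; given it, $\mathrm{tr}(M_{t}^{2k})\le n(1-c/n)^{2k}\le ne^{-2ck/n}$ and the full Schr\"odinger contribution is $\le n^{3}e^{-2ck/n}$, which at $k\asymp n^{2}$ is $O(n^{3}e^{-2cn})$, super-polynomially smaller than $e^{-2\pi^{2}k/n^{2}}$. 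Heuristically the gap is predicted by semiclassics: $M_{t}$ is the Weyl quantization on $\ell^{2}(\nZ)$ of the symbol $\tfrac12\cos\xi+\tfrac12\cos\eta$ on the classical torus at Planck scale $2\pi t/n$, whose unique nondegenerate maximum equals $1$, and a trial-Gaussian or WKB / harmonic-oscillator comparison predicts a ground-state gap of order $1/n$. The main obstacle, and presumably the location of the paper's announced \emph{novel techniques for bounding the spectrum}, is making this gap genuinely uniform in $t$: as $t$ varies, the spectra of the $M_{t}$ exhibit Hofstadter-butterfly fine structure, and naive perturbation arguments break down, particularly when $\gcd(t,n)>1$ or when $t$ is near $n/2$. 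I anticipate this being handled either by an operator-commutator lower bound $\|[U_{1},U_{2}]\|\ge 2|\sin(2\pi t/n)|$ for the two noncommuting unitaries whose self-adjoint parts constitute $2M_{t}$, paired with a Kato/Lieb--Thirring-type trace inequality, or by directly dominating $M_{t}$ by a separable reference operator diagonalized in a basis of discrete Hermite functions.
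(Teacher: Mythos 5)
Your framework is precisely the paper's: Fourier analysis via the Diaconis--Shahshahani upper bound lemma, the split into an abelian sum plus a Schr\"odinger sum with $M_t = \hat Q(\pi_t)$ a finite Harper operator, and a one-dimensional character test for the lower bound. (The paper in fact proves the lower bound by appealing to the known $\mathbb{Z}/n\mathbb{Z}$ lower bound for the first coordinate $X_k$, but that is mathematically the same thing as your $\chi_{1,0}$ test, and your version is arguably cleaner.) The abelian part, including the Riemann-sum control of the full sum by a constant times the $(a,b)=(\pm1,0),(0,\pm1)$ terms, is standard and appears in the same way.

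The genuine gap is exactly where you flag it: you do not prove the spectral estimate for $M_t$, you only name it as ``the crux'' and list candidate techniques. That is the entire technical content of the paper (Section 4). Moreover the techniques you speculate about --- the commutator lower bound $\|[U_1,U_2]\|\ge 2|\sin(2\pi t/n)|$, a Kato/Lieb--Thirring trace inequality, or domination by a separable reference operator diagonalized in discrete Hermite functions --- are \emph{not} what the paper does. The paper's ``novel technique'' is to pass from $M(\xi)$ to the substochastic matrix $L_\xi = \tfrac13 I + \tfrac23 M(\xi)$, add an absorbing state $\infty$ with killing rates $\tfrac13(1-\cos(2\pi\xi j/n))$, and then run the geometric path (canonical-path) machinery for Dirichlet eigenvalues of killed Markov chains: choose explicit paths from each site to $\infty$, keeping paths short near the cosine maxima (where the killing rate is tiny) and bounding the congestion constant. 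This yields $\beta(\xi)\le 1-\theta(\xi/n)^{4/3}$ for $1\le\xi\le n/\log n$ and $\beta(\xi)\le 1-\tfrac34(\xi/n)^2$ for larger $\xi$, with the negative-eigenvalue bound following from the spectral symmetry $S(n,\xi,0)\subseteq -S(2n,2\xi,n/2\xi)$. Note that this is \emph{weaker} than the uniform $\|M_t\|\le 1-c/n$ gap you anticipate (which the companion paper shows is the truth for fixed $\xi$ and large $n$), but it is still exponentially smaller than $e^{-2\pi^2 k/n^2}$ once $k\gtrsim n^2$, which is all the upper bound lemma requires. So your proposed gap is both unproven in your write-up and more than is needed.

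Two smaller points. First, your description of $\widehat{H(n)}$ as one-dimensionals plus $n$-dimensional Schr\"odinger representations indexed by nontrivial central characters is only correct for $n$ prime; for composite odd $n$ there are irreducibles of dimension $m$ for every divisor $m\mid n$, whose Fourier transforms $\hat Q(\rho_{a,b,c})$ are twisted $m\times m$ Harper-type matrices with extra phases $q_n^{\pm a}$ on the off-diagonals and a shifted cosine on the diagonal. The paper records these in Section 3.2 and states the eigenvalue machinery covers them, but only writes out the proof of Theorem~\ref{PT1} for $n=p$ prime. Second, your claimed inequality $\cos^2(\pi/n)\ge e^{-2\pi^2/n^2}$ is true for all $n\ge 3$, but ``easily verified from the Taylor expansion'' is a bit glib at $n=3$, where $\pi/3>1$ and the crude bound $1-x^2/2\ge e^{-x^2}$ fails; a direct numerical check (or a sharper elementary inequality) is needed at the boundary case.
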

We say ``order $n^2$ steps are necessary and sufficient for convergence."  Theorem \ref{PT1} is proved by Fourier analysis on $H(n)$.  This natural approach is known to be challenging.  (See Section \ref{PS2}.)  As explained in Section \ref{PS3}, the Fourier proof involves bounding the spectrum of the $n\times n$ matrices (where the unmarked entries are taken to be zero):
\begin{equation}\label{ourmatrix}M(\xi)=\frac{1}{4}\begin{tikzpicture}[baseline=(current bounding box.center)]
\matrix (m) [matrix of math nodes,nodes in empty cells,right delimiter={)},left delimiter={(} ]{
2  & 1 &  & &&   & 1  \\
 1& & & & &&   \\
  & & & & &&   \\
  & & 1&\phantom{1} &1& &    \\
   & & & & &&   \\
  & & & & &&1  \\
1 & & && &  1&\phantom{1} \\
} ;
\draw[loosely dotted,thick] (m-1-1)-- (m-4-4);
\draw[loosely dotted,thick] (m-2-1)-- (m-4-3);
\draw[loosely dotted,thick] (m-1-2)-- (m-4-5);
\draw[loosely dotted,thick] (m-4-4)-- (m-7-7.center);
\draw[loosely dotted,thick] (m-4-3)-- (m-7-6);
\draw[loosely dotted,thick] (m-4-5)-- (m-6-7);
\node[align=center] (cosgen) at (4.6cm,1cm) [below=3mm]{$2\cos\left(\frac{2\pi \xi j}{m}\right)$,\,\, \tiny{$0\leq j \leq m-1$}};
\path[thick, bend left=45, <-] 
 (m-4-4.center) edge (cosgen.west);
\end{tikzpicture}\end{equation}
These same matrices come up in a variety of solid state physics and ergodic theory problems as `Harper's Operator,' `Hofstader's Butterfly,' and the `Ten Martini's Problem' discussed in Section \ref{PS2}.  They further occur in the engineering literature on the fast Fourier transform and for other random walks on the meta-abelian groups.

Section \ref{PS2} is a literature review, including the parallel problem of a random walk on $H(\mathbb{R})$ and $H(\mathbb{Z})$.  It describes the physics problems, the connections with the fast Fourier transform, and connections to Levy's area.  Section \ref{PS3} gives background on Fourier analysis on $H(n)$.  Theorem \ref{PT1} is proved there assuming our eigenvalue bounds (which are proved later).  It also gives a variety of other walks on other groups where matrices $M(\xi)$ (and thus our bounds) appear.

Eigenvalue bounds are given in Section \ref{PS4}.  Two new techniques are developed.  The first uses geometric path methods, developed for Dirichlet eigenvalues of stochastic matrices to give upper bounds for the largest eigenvalue of $M(\xi)$; note that some entries of $M(\xi)$ are negative and the rows don't sum to a constant.  The second is a new technique for lower bounds on smallest eigenvalues.  The bounds are rough (but more than adequate for the task) and work generally.
\begin{acknowledgment}  We have benefited from useful comments and technical help from Laurent Saloff-Coste, Marty Isaacs, Je-Way Zhou,  Steven Kivelson, Steve Evans, Mehrdad Shahshahani, Thomas Strohmer, Yi Zhang, Akash Maharaj, and Arun Ram. Also, the first, second, third, and fifth authors would like to acknowledge partial support from NSF grants DMS 1001079, DMS 08-04324, DMS 1303761, and DMS 1400248 (respectively).  The remaining author would like to acknowledge partial support from ANR grant number ANR-12-BS01-0019.
\end{acknowledgment}
\section{Background and References}\label{PS2}
For background on Markov chains and random walks on groups, a good introduction is \cite{LPW}.  The Fourier approach is developed in \cite{DiaconisB}.  A variety of analytic approaches are described in \cite{SCB} and \cite{SCA}. The Heisenberg group is ubiquitous in mathematics and physics through its connections to the quantum harmonic oscillator \cite{Cartier}, Harmonic analysis \cite{Howe}, theta functions (\cite{Igusa}, \cite{LV}), and number theory {\cite{Bump}.  Analysis and geometry on the Heisenberg group have recently been important in finding counterexamples to the Goemans-Linial conjecture in complexity theory \cite{LN}.  When $p$ is prime, the groups $H(p)$ are building blocks of the extra special $p$-groups (groups with center, commutator and derived group equal to $\mathbb{Z}/p\mathbb{Z}$).  See \cite{Huppert} and \cite{Suzuki}.

The literature on random walks on the Heisenberg group is reviewed in \ref{PS2.1}, the connection to fast Fourier transform is treated in \ref{PS2.2} the solid state connection is reviewed in \ref{PS2.3}, and the connection to Levy's area is given in \ref{PS2.3}.  Section \ref{PS2.5} describes two companion papers.  The first generalizes to other groups; the second gives precise asymptotics for the top and bottom eigenvalues of $M(\xi)$.

\subsection{Random Walk on $H(n)$}\label{PS2.1}  The probabilistic study of a random walk on $H(n)$ was started by Zack (\cite{Zack}), who gave an interpretation of the walk in terms of random number generation.  Theorem \ref{PT1} was first proved in joint work with Saloff-Coste (see \cite{DSCC}, \cite{DSCB}, \cite{DSCA}) using `geometric theory.'  The first proof used Nash inequalities, the second proof uses moderate growth, showing that a similar proof holds for any sequence of finite nilpotent groups with bounded class and number of generators: $(\text{diameter})^2$ steps are necessary and sufficient for convergence. (The diameter of $H(n)$ with generating set $S$ is of order $n$.)  The third proof of Theorem \ref{PT1} lifts to a random walk on a free nilpotent group (here $H(\mathbb{Z})$).  Then results of Herbish-Saloff-Coste give bounds on the decay rate of convolution powers for the lifted walk.  These are transferred down to the finite quotient using Harnack inequalities.  The beautiful work of Alexopoulos (\cite{Alexopoulos}) should allow similar results for less symmetric walks on quotients of groups with polynomial growth. 

  A fourth proof appears in \cite{DiaconisA}.  This completes the generating set $S$ to a union of conjugacy classes.  Character theory is used to bound the conjugacy walk.  Finally, comparison theory is used to go from the conjugacy walk to the walk based on S. Richard  Stong (\cite{StongC}, \cite{StongB}, \cite{StongA}) has results which specialize to three yet different proofs.  His approach, using `twisted random walks' works for other classes of groups, such as the Burnside groups $B(n,3)$.  It deserves to be developed further.
  
    A further set of approaches/results is worth mentioning.    A simple random walk on $n\times n$ uni-uppertriangular matrices with coefficients mod $p$ has also been studied in a series of papers.  In this work, the emphasis is on large $n$ and fixed $p$ behavior but some of the results specialize well to $n=3$.  We refer to the definitive paper by Peres and Sly \cite{PS} which surveys earlier literature.
    
    Curiously \emph{none} of the papers cited above carries out the program attempted here\textemdash direct use of Fourier analysis.  All we can say is, having pushed it through, we now see why.

    There has also been work on a random walk on the Heisenberg group $H(\mathbb{R})$.  This can be developed as a special case of a random walk on Lie groups\textemdash see Breuillard \cite{Breuillard2} for a useful survey.  The paper \cite {Breuillard1} focuses on $H(\mathbb{R})$ and proves a local limit theorem using Fourier analysis.  Of course, this is close in spirit to the present effort.  Alexopoulos \cite{Alexopoulos} had remarkable decay rates for general discrete groups of polynomial growth.
    
    A natural problem (open to our knowledge) interpolating between the finite and and infinite case would be to study the simple random walk on $H(\mathbb{Z})$ with generating set $S$ of (\ref{PS1}).  Although Breuillard allows discrete measures on $H(\mathbb{R})$, he must rule out measures supported on subgroups.  Indeed, as far as we know, the irreducible unitary representations of $H(\mathbb{Z})$  have not been classified (and seem to give rise to a `wild' problem).
    
    If $Q$ is the measure supported on $S$ (considered on $H(\mathbb{Z})$), the associated random walk is transient.  It is natural to study the decay rate of $Q^{*k}$.  In particular we conjecture that 
   \begin{align}Q^{*k}(\text{id})\sim\frac{c}{k^2} \text{  with } c=\frac{4\Gamma^2\left(\frac{1}{4}\right)}{\pi^{2}}.\label{Aeq1}\end{align}
 A justification for the conjectured constant appears in Section \ref{PS2.4} below.
   
   Notice that (\ref{Aeq1}) could be studied by considering $Q$ from (\ref{P1.2}) on $H(n)$ for $n>>k^2$ (when there is no possibility of the walk wrapping around.)  Indeed $H(n\mathbb{Z})$ is a normal subgroup of $H(\mathbb{Z})$ and all of the irreducible unitary representations of $H(\mathbb{Z}/n\mathbb{Z})$ described in \ref{PS3.2} give rise to irreducible unitary representations of $H(\mathbb{Z})$ via
   $$H(\mathbb{Z})\rightarrow H(\mathbb{Z})/H(n\mathbb{Z})=H(\mathbb{Z}/n\mathbb{Z})\rightarrow GL(V)$$
where $V$ is the vector space underlying the representation of $H(\mathbb{Z}/n\mathbb{Z})$.   Those quotients are all of the finite dimensional simple representations of $H(\mathbb{Z})$.  (See \cite{NM}.)  The usual infinite dimensional irreducible unitary representations of $H(\mathbb{R})$ restricted to $H(\mathbb{Z})$ are also irreducible but alas there are others which do not seem to be known at this writing.

\subsection{The Discrete Fourier Connection} \label{PS2.2} For $f:\mathbb{Z}/n\mathbb{Z}\rightarrow \mathbb{C}$, the discrete Fourier transform (DFT) takes $f\rightarrow \hat{f}$ with $$\hat{f}(k)=\frac{1}{\sqrt{n}}\sum_{\xi=0}^{n-1} f(\xi)e^{\frac{2\pi i \xi k}{n}}.$$  This is a mainstay of scientific computing because of the fast algorithms (FFT).  There is a close connection between the discrete Heisenberg groups $H(n)$ and these transforms.  This is charmingly exposited in Auslander and Tolimieri (\cite{AT}).  Let the matrix associated with the DFT be $\mathcal{F}_n$, with $(\mathcal{F}_n)_{j,k}=\frac{e^{\frac{2\pi i jk}{n}}}{\sqrt{n}}$.  Recall the matrix of the Fourier transform $M(1)$ at (\ref{ourmatrix}).  It is straightforward to see that $\mathcal{F}_n$ commutes with $M(1)$:
$$\mathcal{F}_n^{-1} M(1) \mathcal{F}_n=M(1).$$
   This implies that these Hermitian matrices can be simultaneously diagonalized.  As explained in \cite{DS} and \cite{Mehta}, there is engineering interest in diagonalizing $\mathcal{F}_n$.  We are interested in diagonalizing $M(1)$ and hoped information about $\mathcal{F}_n$ would help.
   
   Alas, ${\mathcal{F}_n}^{4}=\text{id}$, so the eigenvalues are $\pm 1, \pm i$.  It is known that the associated eigenspaces have dimension (within one of) $\frac{n}{4}$.(\cite{DS})  This does not give a useful reduction for our problem.  Going the other way, Dickinson and Steiglitz \cite{DS} looked for matrices commuting with ${\mathcal{F}_n}$ and suggested $M(1)$(!).  Mehta \cite{Mehta} found a basis for eigenfunctions of ${\mathcal{F}_n}$ as `periodicized Hermite functions'.  These are tantalizingly close to the approximate eigenfunctions for $M(1)$ that we find in \cite{BDHMW2}.  We are sure there is more to understand on this front.
   \subsection{Harpers Operator} \label{PS2.3}In trying to understand the band structure of the x-ray diffraction patterns of various solids (particularly metals),a variety of model problems involving Schr\"odinger operators with periodic potentials have been introduced and studied.  Perhaps the most widely studied model is Harper's model on $\ell^2(\mathbb{Z})$ with Hamiltonian $$H_{\theta,\phi}\xi(n)=\xi(n+1)+\xi(n-1)+2\cos(2\pi(n\theta+\phi))\xi(n),$$
   $n\in \mathbb{Z}$, $\theta, \phi\in [0,1]$.  The spectrum of $H_{\theta, \phi}$ is pictured as ``Hofstadter's Butterfly'' with its own Wikipedia page.  With $2\cos$ replaced by $A\cos$, the operator is often called the almost Mathieu operator (with its own Wikipedia page).  For $\theta$ irrational, the norm of $H_{\theta,\phi}$ is known not to depend on $\phi$.  Our operator $M(\xi)$ is a discretized version of $H_{\frac{\xi}{n},0}$ which is often used to get numerical approximations to the spectrum.  It has also been used to prove things, see the beautiful account in Section 3 of B{{\'e}guin-Valette-Zuk \cite{BVZ}.  They give bounds on the largest and smallest eigenvalues of $M(\xi)$ which we are able to sharpen considerably in some cases.
   
   Any periodic Schr\"{o}dinger operator has band structure in its spectrum because of Bloch's Theorem \cite{Griffiths}.  Determining the fine structure of the spectrum of $H_{\theta,\psi}$ has generated a large literature well surveyed in \cite{Last}.  One of the signature problems here was dubbed the ``Ten Martini's problem'' by Barry Simon.  It has recently been completely resolved by work of Last \cite{Last}, and Avila and Jitomirskaya (\cite{AJ}).
   
   There is considerable interest in a variety of similar operators, e.g.\ in $M(\xi)$, replace $\cos\left(\frac{2\pi\theta j}{n}\right)$ with $\cos\left(\frac{2\pi\theta j}{n}\right)+\cos\left(\frac{2\pi\xi j}{n}\right)$.   See Zhang et al \cite{ZMK} and their references.  We have tried to develop our bounds on the spectrum of $M(\xi)$ (in Section \ref{PS4}) so that it applies to these generalizations.
   
   \subsection {Levy Area Heuristics}\label{PS2.4}
   The random walk on $H(\mathbb{Z})$ with generating set $S=\{(1,0,0),(-1,0,0),(0,1,0),(0,-1,0)\}$ leads to interesting probability theory which gives a useful heuristic picture of the behavior on $H(n)$ as well.  This section gives the heuristics; mathematical details will appear in a later publication.
   
    Let the successive steps of the walk be  $$(\epsilon_1,\delta_1,0),(\epsilon_2,\delta_2,0),\dots, (\epsilon_n,\delta_n,0),\dots\in S.$$  By a simple direct computation, $(\epsilon_n,\delta_n,0)\dots(\epsilon_1,\delta_1,0)=(X_n,Y_n,Z_n)$ with 
    $$X_n=\epsilon_1+\dots\epsilon _n,\,\,\, Y_n=\delta_1+\dots+\delta_n,$$
    $$Z_n=\epsilon_2\delta_1+\epsilon_3(\delta_1+\delta_2)+\dots+\epsilon_{n}(\delta_1+\dots+\delta_{n-1}).$$
    Thus the $X_n$ and $Y_n$ coordinates perform simple random walks on $\mathbb{Z}$ $\left(P(\epsilon_i=1)=P(\epsilon_i=-1)=\frac{1}{4}\right)\text{ and }\left(P(\epsilon_i=0)=\frac{1}{2}\right)$.  The Local Central Limit Theorem shows
    $$P\{X_n= j\}\sim \frac{e^{-\frac{1}{2}\left(\frac{j}{\sigma\sqrt{n}}\right)^2}}{\sqrt{2\pi \sigma^2 n}}\,\,\, \text{ with } \sigma^2=\frac{1}{2}.$$
    
    The same result holds for $Y_n$ and indeed $(X_n,Y_n)$ have independent Gaussian approximations.  In particular, $$P\{(X_n,Y_n)=(0,0)\}\sim \frac{1}{\pi n}.$$
    
    Next consider $Z_n$.  This is a martingale and a straightforward application of the martingale central limit theorem shows that $$P\left\{\frac{Z_n}{n}\leq \xi\right\}\rightarrow F(\xi)$$ with $F(\xi)$ the distribution function of $$\frac{1}{2}\int_0^1 B_1(s)dB_2(s)$$
    with $B_i$ standard Brownian motion.  We note that $\int_0^1B_1(t)dB_2(t)-\int_0^1B_2(t)dB_1(t)$ is Levy's area integral, the distribution of the signed area between Brownian motion and its chord at time 1.
    
    In turn, $$\int_0^1 B_1(s)dB_2(s)$$ can be shown to have the density
    $$f(\xi)=\frac{1}{\pi^{\frac{3}{2}}}\Gamma\left(\frac{1}{2}\right)2^{\frac{3}{2}}\Gamma\left(\frac{1}{4}+\frac{i\xi}{2}\right)\Gamma\left(\frac{1}{4}-\frac{i\xi}{2}\right),\,\,\, -\infty<\xi<\infty$$
    
    See Harkness and Harkness \cite{HH} for background and details.
    
    If $\frac{X_n}{\sqrt{n}},\frac{Y_n}{\sqrt{n}},\frac{Z_n}{\sqrt{n}}$ are asymptotically independent, this would suggest that
    $$P\{(X_n,Y_n,Z_n)=(0,0,0)\}\sim \frac{c}{n^2},\,\,\, c=\frac{4\Gamma^2\left(\frac{1}{4}\right)}{\pi^{2}}.$$
    Note that Breuillard \cite{Breuillard1} has a local limit theorem of a similar sort for a non-lattice walk on $H(\mathbb{R})$ with a less well identified constant.  We conjecture that this is our $c$.
    A further heuristic conclusion which this paper makes rigorous:  the walk on $H(\mathbb{Z})$ can be mapped to the walk on $H(m)$ by considering the coordinates mod $m$.  This familiar trick works, for example, for $X_n (\text{mod } m)$, showing that the simple random walk mod $(m)$ requires order $m^2$ steps to get close to uniform.  The same argument works for the joint distribution of $(X_n,Y_n)$.  The proof for these cases requires something like the Berry-Esseen Theorem.  See Diaconis \cite{DiaconisB} for details for $X_n (\text{mod } m)$ and \cite{Alexopoulos} for details on $H(n)$.  The limit theorem  for $\frac{Z_n}{n}$ suggests that order $m$ steps are necessary and sufficient.  This suggest that order $m^2$ steps are necessary and sufficient for randomness on $H(m)$.  This is exactly what Theorem \ref{PT1} proves.
    
    This aspect of `features' of a Markov chain getting to their stationary distribution at different rates is important in practice.  A simulation is often run with just a few features of interest and the relaxation time for the whole state space may be deceptive.  See \cite{ADS} for references to a handful of cases where this can be carefully proved.
    Generalizing, if $U_n(m)$ is the group of $n\times n$ uni-uppertriangular matrices with coefficients mod $m$, let a simple random walk be determined by picking one of the $n-1$ coordinates just above the diagonal and placing $\pm 1 $ there (with the rest zero.)  We conjecture that coordinates in the first diagonal require order $m^2$ step; in the second diagonal, order $m$ steps; in the third diagonal, order $m^{\frac{2}{3}}$ steps; with order $m^{2/k}$ steps for the $k$th diagonal.  Here $n$ is fixed, $1\leq k\leq n-1$ and $m$ large are the cases of interest.
    \subsection{Future Developments} \label{PS2.5}  The present paper has two companion papers.  The first \cite{BDHMW3} abstracts the present results to a larger class of walks and groups.  The second gets much more precise bounds on the extreme eigenvalues of the matrices $M(\xi)$.  Since essentially these matrices come up for the generalizations, present techniques and the refinements can be used to give satisfactory analyses of the generalizations.  
    
    Let $G$ be a finite group with a normal abelian subgroup $A$ and a subgroup $B$ such that $G=BA$, $B\cap A =\mathit{id}$.  Thus $G=B\ltimes A$ is a semi-direct product. Let $a_1,\dots, a_s$ be a symmetric generating set for $A$; $b_1, \dots, b_t$ a symmetric generating set for $B$.   Then $\{a_i,b_j\}$ generate $G$.  Consider the probability measure (with $0<\theta<1$ fixed):
    $${Q}(g)=\begin{cases}\frac{\theta}{s} &\text{ if }g \in \{a_1,\dots,a_s\}\\\frac{1-\theta}{t}&\text{ if }g \in \{b_1,\dots,b_t\}\\0& \text{ otherwise}\end{cases}.$$
    
    The representation theory of semi-direct products is well known. See Serre Chapter 7 (\cite{Serre}).  The irreducible representations are determined by the representations of $B$ and by inducing the characters of $A$ up to $G$.  Let $\chi \in \hat{A}$.  If $\rho_\chi(b,a)$ is the induced representation (a $|B|\times|B|$ matrix),
    $$\hat{{Q}}(\rho_\chi)=\theta P_B+ (1-\theta)D,$$
    with $P_B$ the transition matrix of the $B$-walk (based on a uniform choice of $b_i$) and $D$ a diagonal matrix with $b$th element $$D(b,b)=\frac{1}{s}\sum_{i=1}^s\chi(b^{-1}a_ib).$$  This is in sharp parallel with the present development and we have found that the techniques of the present Section \ref{PS4} suffice for successful analyses.   Examples from \cite{BDHMW3} include:
    \begin{itemize}
    \item The random walk on $H(n)$ generated by $(x(i),0,0)$ and $(0,y(j),0)$ for (perhaps non symmetric) generating sets $1\leq i\leq I$, $1\leq j \leq J$ chosen with some (perhaps non-uniform) probabilities.     
    \item The $d$-dimensional Heisenberg groups $G=(\undertilde{x},\undertilde{y},z)$, $\undertilde{x},\undertilde{y}\in(\mathbb{Z}/n\mathbb{Z})^d$, $z\in (\mathbb{Z}/n\mathbb{Z})$.
    \item The affine groups: $G=(a,b)$, $\theta\in \mathbb{Z}/n\mathbb{Z}$, $a \in (\mathbb{Z}/n\mathbb{Z})^*$

    \end{itemize}
These and further examples are treated in \cite{BDHMW3}.  That paper also develops a useful supercharacter theory which allows the comparison approach \cite{DSCD},\cite{DiaconisA} to be used for this class of examples.
The second companion paper gives sharp bounds on the eigenvalues and eigenvectors of matrices $M(\xi)$, for certian choices of $\xi$ and dimension $m$.  For ease of exposition, work with $M(1)$.  It is shown that the $k$th largest eigenvalue of $M(1)$ is equal to $1-\frac{\mu_k}{2n}+o\left(\frac{1}{n}\right) $ where $\mu_k$ is the $k$th smallest eigenvalue of the harmonic oscillator
$$L=-\frac{1}{2}\frac{d^2}{dx^2}+2\pi^2x^2.$$
 on $(-\infty,\infty)$ (so $\mu_k=(2k+1)\pi)$, $0\leq k <\infty$ fixed.   The corresponding eigenfunctions of $L$ are well known to be Hermite functions; with $a=\pi$,
 \begin{align*}
 \phi_0(x)&=e^{-ax^2}\\
 \phi_1(x)&=-4axe^{-ax^2}\\
 &\vdots\\
 \phi_n(x)&=e^{ax^2}\frac{d^n}{dx^n}(e^{-2ax^2})=e^{ax^2}\frac{d}{dx}e^{-ax^2}\phi_{n-1}(x)
 \end{align*}
 It is also shown that wrapped versions of these are approximate eigenfunctions of $M(1)$.  Numerical comparisons, not reported here, show that these approximate eigenfunctions are very accurate for fixed $k$ and large $n$.
 
 The present technique gives cruder results which are all that we need and are available for all $\xi$.
    \section{Representation Theory} \label{PS3}
    This section develops the tools needed for representation theory:  Fourier inversion, the Plancherel Theorem, representations of $H(n)$, and a formula for the distribution of central elements under convolution powers.  Theorem \ref{PT1} is proved when $n$ is prime assuming the eigenvalue bounds in the following Section \ref{PS4}.  For background, see \cite{Serre} and \cite{DiaconisB}.
    \subsection{Basic Formulas} \label{PS3.1}  Let $G$ be a finite group with $\hat{G}$ the irreducible unitary representations. For ${Q}$ a probability on $G$ and $\rho\in \hat{{G}}$, define the Fourier Transform:
    $$\hat{{Q}}(\rho)=\sum_g Q(g)\rho(g).$$
As usual, Fourier transforms take convolution into products: $\widehat{{Q}*{Q}}(\rho)=(\hat{{Q}}(\rho))^2$.  For the uniform distribution $U(g)=\frac{1}{|G|}$, Schur's lemma implies that $\hat{U}(\rho)=0$ for all nontrivial $\rho\in \hat{G}$.  The Fourier Inversion Theorem shows how to reconstruct $Q$ from $\hat{Q}(\rho)$:
$${Q}(g)=\frac{1}{|G|}\sum_{\rho \in \hat{G}}d_\rho \operatorname{tr}(\hat{{Q}}(\rho){\rho(g)}^*).$$ The basic Upper Bound Lemma ( see \cite{DiaconisA} page 24) gives a bound on the distance to stationary using the Fourier Transform:
$$4\|Q^{*k}-U\|^2_{TV}\leq \sum_{\substack{\rho\in \hat{G}\\\rho \neq 1}} d_{\rho}\|\hat{Q}^k(\rho)\|^2.$$
In these sums, $d_\rho$ is the dimension of $\rho$, $\|M\|^2=\operatorname{tr}(MM^*)$ and $\rho=1$ is the trivial representation.

    \subsection{ Representation Theory of $H(n)$}\label{PS3.2}
      A neat elementary description of the representations of $H(n)$ appears in \cite{GH}.  
      We find the following self contained derivation more useful for our application.  
      For every divisor $m$ of $n$, $1\leq m\leq n$ , there are $\left(\frac{n}{m}\right)^2 \phi(m)$ distinct irreducible representations of degree $m$, where $\phi(m)$ is the Euler phi function.  
      Since
    $$\sum_{m|n}\left(\frac{n}{m}\right)^2\phi(m)m^2=n^3,$$ this is a complete set.
    
    For the construction, fix $m|n$.  Let 
    $$V=\left\{f:\mathbb{Z}/{m\mathbb{Z}}\rightarrow\mathbb{C}\right\}.$$
    Now fix $a,b\in \mathbb{Z}/(\frac{n}{m}\mathbb{Z})$ and $c\in \mathbb{Z}/m\mathbb{Z}$ with $(c,m)=1$.
    Let $q_m=e^{2\pi i/m}$, $q_n=e^{2\pi i/n}.$  Define $\rho_{a,b,c}(x,y,z):V\rightarrow V$ by $$\rho_{a,b,c}(x,y,z)f(j)=q_n^{ax+by}q_m^{c(yj+z)}f(j+x).$$
  In these formulas, regard $\mathbb{Z}/(\frac{n}{m}\mathbb{Z})$ and $\mathbb{Z}/({m}\mathbb{Z})$ as subgroups of $\mathbb{Z}/n\mathbb{Z}$ by sending $1$ to $m$ (resp. $n/m$.)  Using  $(x,y,z)(x'y'z')=(x+x',y+y',z+z'+xy')$, it is easy to check that $\rho_{a,b,c}$ is a representation:
  \begin{align*}\rho_{a,b,c}(x,y,z)\rho_{a,b,c}(x',y',z')f(j)&=\rho_{a,b,c}(x,y,z)q_n^{ax'+by'}q_m^{c(y'j+z')}f(j+x')\\&=q_n^{a(x+x')+b(y+y')}q_m^{c(y'(j+x)+z'+yj+z)}f(j+x+x')
 \\&=\rho_{a,b,c}(x+x',y+y',z+z'+xy')f(j).
   \end{align*}
   To compute the character, note that the trace of $\rho_{a,b,c}(x,y,z)$ is zero unless $m|x.$  In this case, the trace is $q_n^{ax+by}q_m^{cz}\sum_{j=0}^{m-1}q_m^{cyj}$.  This sum is zero unless $m|y$ (when it is $m$).  Thus:
   $$\chi_{a,b,c}(x,y,z)=\begin{cases}0 &\text{ unless } m|x\text{ and }m|y\\
   q_n^{ax+by}q_m^{cz}m&\text{ if }m|x,y\end{cases}$$
   Distinctness and irreducibility can be determined by computing:
   $$\langle \chi_{a,b,c}|\chi_{a',b',c'}\rangle=\frac{m^2}{n^3}\sum_{j_1,j_2,z}q_n^{m(a-a'){j_1}+m(b-b')j_2}q_m^{(c-c')z}=\delta_{a b c,a' b' c'}.$$  In the sum, $j_1, j_2$ run over $0,1,\dots,n/m-1$ and $z$ runs over $0,1,\dots,n-1$.  Since $a,b$ can take any values in $0,1,\dots, \frac{n}{m}-1$ and $c$ runs over numbers relatively prime to $m$, this makes $\left(\frac{n}{m}\right)^2\phi(m)$ distinct irreducible characters of dimension $m$.  Summing over divisors of $n$ accounts for all of the characters.  
   \begin{example}When $m=1$, this gives the $n^2$ one dimensional characters $\chi_{a,b}(x,y,z)=q_n^{ax+by}$.  Note that $q_1=1$ so that $c$ doesn't occur.)
   \end{example}
   \begin{example}  When $m=2$, set $$\delta(x)=\begin{cases}1 &\text{if $x$ is even}\\0 &\text{ if $x$ is odd.},\end{cases}$$ with   $ \overline{\delta(x)}=1-\delta({x})$. 
   $$\rho_{a,b,1}(x,y,z)=q_n^{ax+by}q^z_m\begin{bmatrix}\delta(x)&
\overline{\delta(x)}\\   \overline{\delta(x)}q^y_m&\delta(x)q^y_m\end{bmatrix}$$   Note that $\operatorname{tr}(\rho_{a,b,1}(x,y,z))$ agrees with $\chi_{a,b,1}$ above.
   \end{example}
   \begin{example}Consider $\hat{   \mathcal{Q}}(a,b,c)$ for $$\mathcal{Q}(1, 0, 0 )=\mathcal{Q}(-1 ,0 ,0)=\mathcal{Q}(0 ,1 ,0)=\mathcal{Q}(0 ,-1 ,0)=\frac{1}{4}.$$
  When $m=1$, $\hat{\mathcal{Q}}(a,b)=\frac{1}{2}\cos\left(\frac{2\pi a}{n}\right)+\frac{1}{2}\cos\left(\frac{2\pi b}{n}\right)$ for $0\leq a,b\leq n-1$.
  
  When $m=2$ (so $n$ is even) $$\hat{Q}(a,b,1)=\frac{1}{2}\begin{bmatrix}\cos\left(\frac{2\pi b}{n}\right)&\cos\left(\frac{2\pi a}{n}\right)\\\cos\left(\frac{2\pi a}{n}\right)&-\cos\left(\frac{2\pi b}{n}\right)\end{bmatrix}.$$
  
  In general, when $m\geq 3$, 
$$\hat{\mathcal{Q}}(a,b,c)=\frac{1}{4}\begin{tikzpicture}[baseline=(current bounding box.center)]
\matrix (m) [matrix of math nodes,nodes in empty cells,right delimiter={)},left delimiter={(} ]{
\phantom{q_n^{-a}}  & q_n^{a} &  & &&   & q_n^{-a}  \\
 q_n^{-a}& & & & &&   \\
  & & & & &&   \\
  & & q_n^{-a}&\phantom{q_n^{-a}} &q_n^{a}& &    \\
   & & & & &&   \\
  & & & & &&q_n^{a}  \\
q_n^{a} & & && &  q_n^{-a}&\phantom{q_n^{-a}} \\
} ;
\draw[loosely dotted,thick] (m-1-1.center)-- (m-4-4);
\draw[loosely dotted,thick] (m-2-1)-- (m-4-3);
\draw[loosely dotted,thick] (m-1-2)-- (m-4-5);
\draw[loosely dotted,thick] (m-4-4)-- (m-7-7.center);
\draw[loosely dotted,thick] (m-4-3)-- (m-7-6);
\draw[loosely dotted,thick] (m-4-5)-- (m-6-7.west);
\node[align=center] (cosgen) at (5.3cm,1.4cm) [below=3mm]{$q_n^bq_m^{jc}+q_n^{-b}q_m^{-jc}$\,\,\\\hspace{1cm}\tiny{$0\leq j \leq m-1$}};
\path[thick, bend left=35, <-] 
 (m-4-4.center) edge (cosgen.west);
\end{tikzpicture}$$  
Note that conjugating $\hat{Q}(a,b,c)$ by a diagonal matrix with $1,q_n^a,q_n^{2a},\dots q_n^{(m-1)a}$ on the diagonal results in a matrix with ones on the super and subdiagonals (and $q_n^{-ma}$, $q_n^{ma}$ in the two corners).
   \end{example}
   \begin{example}  When $n$ is a prime $p$ , there are $p^2$ one-dimensional representations and $p-1$ $p$-dimensional representations, with $\hat{Q}(0,0,c)$ conjugate to $M(c)$ of the introduction.
   \end{example}
            \subsection{ Proof of Theorem \ref{PT1} when $n=p$ is prime}\label{PS3.3}
            From the Upper Bound Lemma stated in Section \ref{PS3.1} and the formulas for $\hat{\mathcal{Q}}(\rho)$ in Section \ref{PS3.2},
            \begin{align*}4\|Q^{*k}-U\|^2_{TV}\leq\sum_{(a,b)\neq (0,0)}\left(\frac{1}{2}\cos\left(\frac{2\pi a}{p}\right)+\frac{1}{2}\cos\left(\frac{2\pi b}{p}\right)\right)^{2k}\\\vspace{3cm}+\sum_{\xi=1}^{p-1}p\|M(\xi)^k\|^2=I+\mathit{II}\end{align*}
            To understand $I$, first consider  $a=0$ and $b=1$.  This term is $$\left(\frac{1}{2}+\frac{1}{2}\cos\left(\frac{2\pi}{p}\right)\right)^{2k}=\left(1-\frac{\pi^2}{p^2}+O\left(\frac{1}{p^4}\right)\right)^{2k}.$$  This term is of size $e^{-B\eta}$ if $k=\eta p^2$ with $B=2\pi^2$.   A standard analysis, carried out in detail in \cite{DiaconisA} shows that the other terms in (I) can be similarly bounded and that the sum of these terms is at most $Ce^{-4\pi^2\eta}$ for a universal constant $C$, independent of $p$ and $k$.
            Consider the next sum ($\mathit{II}$).  The matrix $M(\xi)$ is symmetric with real eigenvalues $1>\beta_1(\xi)\geq\dots \geq \beta_p(\xi)>-1$. In Section \ref{PS4} it is shown that for $0<\xi<\frac{p}{\log(p)}$, $$\beta^*(\xi)=\max(\beta_1(\xi),-\beta_p(\xi))\leq 1-\frac{\theta}{p^{4/3}}$$  for a universal constant $\theta$.  Also, for $\frac{p}{\log(p)}\leq\xi<\frac{p}{2}$, it is shown that
            $$\beta^*(\xi)\leq 1-\frac{3}{4}\left(\frac{\xi}{p}\right)^2.$$  Furthermore note that $$\beta^*(\xi)=\beta^*(p-\xi).$$
              Indeed, as mentioned in Section \ref{PS2.5}, much sharper bounds are found.  These are not relevant for this proof.  Using these bounds with $\overline{\theta}=\min\{\theta,\frac{3}{4}\}$, $$\mathit{II}\leq p^3\left(1-\frac{\overline{\theta}}{p^{4/3}}\right)^{2k}+p^3\left(1-\frac{\overline{\theta}}{\log(p)}\right)^{2k}.$$
            This is exponentially smaller than the bound for $I$, completing the proof of the upper bound.
            
            A lower bound for the total variation follows from a lower bound for the total variation convergence of the $(1,2)$ coordinate $X_k$ to the uniform distribution on $\mathbb{Z}/p\mathbb{Z}$.  As explained in Section \ref{PS2.3}, $X_k$ evolves as a simple random walk on $\mathbb{Z}/p\mathbb{Z}$ and a lower bound showing that $k=\theta p^2$ steps is not sufficient for a fixed $\theta$ is well known.  See \cite{DiaconisB} for several different proofs.  This completes the proof.  
            
            We will not carry out the details of the proof of Theorem \ref{PT1} in the case of general $n$.  Bounds for the various transforms $\hat{\mathcal{Q}}(a,b,c)$ are derived in Section \ref{PS4}.  No new ideas are needed.
  \subsection{ A Formula for the Center} \label{PS3.4}  Let $n=p$ be an odd prime.  Let $Z_k$ be the value of the central element after $k$ steps of the walk.  A formula for $P\{Z_k=z\}$ can be derived from the Fourier Inversion Theorem.  We do not see how to use this to derive useful estimates but record it here in case someone else does.  
  \begin{proposition} For a simple random walk on $H(p)$ with $p$ an odd prime, all $k\geq0$, and $z \in \mathbb{Z}/p\mathbb{Z}$,
  $$P\{Z_k=z\}=\frac{1}{p}+\frac{1}{p^2}\sum_{\xi=1}^{p-1}e^{\frac{-2\pi i \xi z}{p}}\sum_{j=1}^{p}(\hat{\mathcal{Q}}(\rho_\xi)^{k})_{1,j}.$$
  \end{proposition}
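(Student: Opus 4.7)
The plan is to apply the Fourier inversion formula of Section~\ref{PS3.1} directly to $Q^{*k}$ and sum over the non-central coordinates. Specifically, I would write
\begin{equation*}
P\{Z_k = z\} \;=\; \sum_{x,y \in \mathbb{Z}/p\mathbb{Z}} Q^{*k}(x,y,z) \;=\; \frac{1}{p^3}\sum_{\rho \in \hat{G}} d_\rho \operatorname{tr}\!\Bigl(\hat{Q}^k(\rho)\, \sum_{x,y}\rho(x,y,z)^{*}\Bigr),
\end{equation*}
so the task reduces to computing the partial sum $\sum_{x,y}\rho(x,y,z)^{*}$ for each irreducible $\rho$, using the classification of Section~\ref{PS3.2}.

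For the $p^2$ one-dimensional characters $\chi_{a,b}(x,y,z) = q_p^{ax+by}$ the character is independent of $z$, and $\sum_{x,y}q_p^{-ax-by} = p^{2}\delta_{a,0}\delta_{b,0}$, so only the trivial character contributes, giving the constant term $\frac{1}{p}$. For the $p-1$ representations $\rho_\xi := \rho_{0,0,\xi}$ of degree $p$, I would use the explicit matrix form $(\rho_\xi(x,y,z))_{i,j} = q_p^{\xi(yi+z)}\delta_{j,i+x}$, whence $(\rho_\xi(x,y,z)^{*})_{i,j} = q_p^{-\xi(yj+z)}\delta_{i,j+x}$. Summing over $x$ removes the Kronecker delta (exactly one value of $x$ works for each pair $(i,j)$), and summing over $y$ invokes the orthogonality $\sum_{y}q_p^{-\xi yj} = p\,\delta_{j,0}$, which relies on $p$ being prime and $\xi \not\equiv 0$. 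Together these collapse the double sum to
\begin{equation*}
\sum_{x,y}\rho_\xi(x,y,z)^{*} \;=\; p\, q_p^{-\xi z}\, E,
\end{equation*}
where $E$ is the rank-one $p \times p$ matrix with $1$'s in the first column and zeros elsewhere. Consequently $\operatorname{tr}(\hat{Q}^k(\rho_\xi)\,E) = \sum_{j=1}^{p}(\hat{Q}^k(\rho_\xi))_{1,j}$, and combining with the Fourier-inversion prefactor $d_\rho/p^3$ and summing over $\xi = 1,\dots,p-1$ yields the stated formula.

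No substantive obstacle remains once one makes the observation that $\sum_{x,y}\rho_\xi(x,y,z)^{*}$ collapses to a rank-one matrix supported on a single column: this is precisely what causes the trace to pick out only the first row of $\hat{Q}^k(\rho_\xi)$. The one point requiring care is the use of primality of $p$ in the $y$-sum; for composite $n$ the delta in $j$ would be supported on a nontrivial subgroup, and the first-row sum would have to be replaced by a sum over several rows and columns, explaining why the prime case gives such a clean formula.
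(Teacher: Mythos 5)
Your proof is correct and takes essentially the same route as the paper's: Fourier inversion, summing out $x,y$, observing that the one-dimensional characters die except for the trivial one (contributing $1/p$), and collapsing $\sum_{x,y}\rho_\xi(x,y,z)^*$ to $p\,q_p^{-\xi z}E$ where $E$ has $1$'s in its first column, so the trace picks out the first row of $\hat{Q}^k(\rho_\xi)$. The paper works directly with the action of $\rho_\xi$ on functions $f$ rather than with explicit matrix entries, but the computation is the same one.

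One thing you should flag, though: if you trace the constants carefully, the prefactor $d_\rho/p^3 = 1/p^2$ times the factor $p$ from $\sum_{x,y}\rho_\xi^* = p\,q_p^{-\xi z}E$ gives an overall coefficient $1/p$, not $1/p^2$, on the $\xi$-sum. A quick sanity check at $k=0$ confirms this: with coefficient $1/p$ the right side equals $\delta_{z,0}$ as it must, whereas with $1/p^2$ it does not. So the formula your argument actually proves is
\begin{equation*}
P\{Z_k=z\}=\frac{1}{p}+\frac{1}{p}\sum_{\xi=1}^{p-1}e^{-2\pi i \xi z/p}\sum_{j=1}^{p}\bigl(\hat{\mathcal{Q}}(\rho_\xi)^{k}\bigr)_{1,j},
\end{equation*}
and the $1/p^2$ in the stated proposition appears to be a typo. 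Your derivation is right; you should not claim it ``yields the stated formula'' without noticing that the constant it produces disagrees with what is written.
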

\begin{proof}  From the Fourier Inversion Theorem, for any $(x,y,z)$ 
$$P(X_k=x,Y_k=y, Z_k=z)=\frac{1}{p^3}\sum_{\rho\in \hat{G}}d_\rho\operatorname{Tr}\left((\hat{{Q}}(\rho))^k\rho((x,y,z)^{-1})\right).$$
Sum over $x,y$ and bring this sum inside the trace.  The terms for the 1-dimensional representations are easily seen to vanish except of course for the trivial representation which contributes $p^2/p^3=1/p$.  For the $p$ dimensional representations, writing $\rho_\xi$ for $\rho_{0,0,\xi}$, $\rho_\xi((x,y,z)^{-1})=\rho_\xi(-x,-y,-z+xy)$.  In the basis of Section \ref{PS2.3}
$$\rho_\xi(-x,-y,-z+xy)f(j)=e^{\frac{2\pi i \xi}{p}(-yj-z+xy)}f(j-x).$$
Fixing $x$ and summing over $y$, $\sum_y e^{\frac{-2\pi i \xi}{p}(j-x)y}=p \delta(j,x) $.  Then,  $$\sum_{y}\rho_\xi(-x,-y,-z+xy)f(j)=pe^{\frac{-2\pi i \xi z}{p}}f(0).$$  Summing over $x$ gives $$\sum_{x,y}\rho_\xi(-x,-y,-z+xy)=pe^{-\frac{2\pi i \xi z}{p}}\begin{bmatrix}1&0&\dots&0\\\vdots&\vdots&&\vdots
\\1&0&\dots&0\end{bmatrix}.$$  Multiplying by $\hat{{Q}}(\rho_\xi)^k$ and taking the trace gives the result.
\end{proof}
\begin{remark}  Of course, the elements in the first row of $\hat{{Q}}^k(\rho_\xi)$ tend to zero, but it does not seem simple to get quantitative bounds.
\end{remark}   
    \section{Eigenvalue Bounds} \label{PS4}  This section gives upper and lower bounds for the eigenvalues of the matrices $M(\xi)$ of (\ref{ourmatrix}}).  Recall that these are $n\times n$, with $\frac{1}{4}$ in the upper right and lower left corners and $\frac{1}{2}\cos(\frac{2\pi \xi j}{n})$ for $0\leq j\leq n-1$ on the main diagonal.  These matrices have some symmetry properties, noted in Section \ref{PS4.1}, which simplifies the job.  Section \ref{PS4.2} gives upper bounds for the largest eigenvalue of the form $1-\theta\left(\frac{\xi}{n}\right)^{\frac{4}{3}}$ for $\theta$ an absolute positive constant and $\xi<<n$.  The argument makes novel use of bounds for Dirichlet eigenvalues using geometric path techniques.  It is useful for more general diagonal entries and less structured matrices.  A different variant bounds the largest eigenvalues for larger $\xi$.  Section \ref{PS4.3} gives lower bounds for the smallest eigenvalues.  These use quite different ideas:  Comparisons with matrices constructed to have `nice' eigenvalues.
    
    The following series of figures shows numerical examples.  Figure \ref{PF1} shows the top eigenvalues $\beta_1(\xi)$ of $M(\xi)$ when $n=150$.  As $\xi$ varies from $1,2,\dots$ for small $\xi$, the top eigenvalues are close to 1 and fall off as $\xi$ increases but they are not monotone.  The evident symmetry $\beta_1(c)=\beta_1(n-c)$ is proved in Section \ref{PS4.1}.  Figure \ref{PF2} shows the smallest eigenvalues of $M(\xi)$.  While these appear as a mirror image of Figure \ref{PF1}, a closer look at the numbers shows this is only an approximation and the same proof method does not work.  Figure \ref{PF3} shows all of the eigenvalues of M(1) for $n=150$.  
\begin{figure}
\includegraphics[width=8cm]{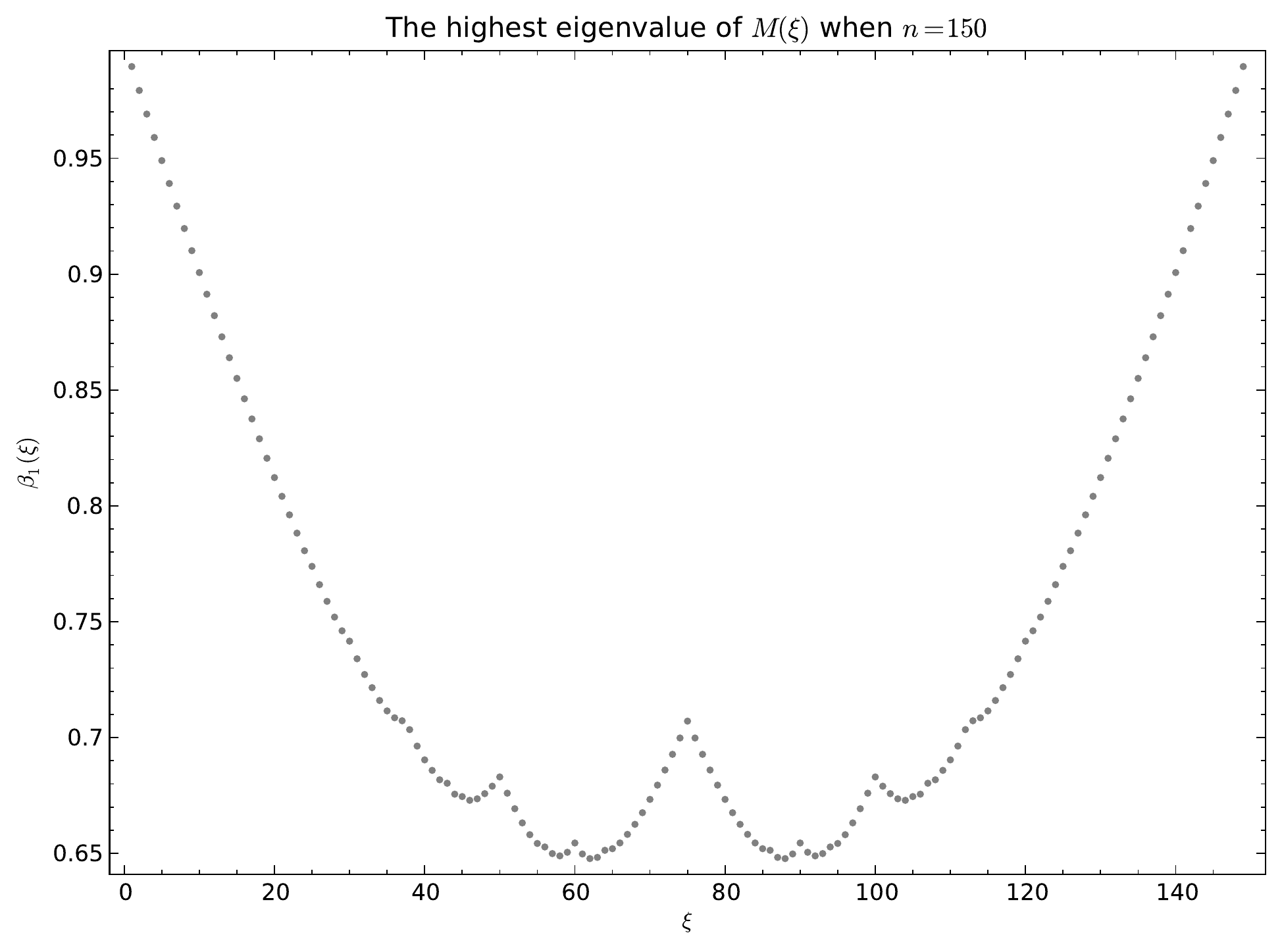}
\caption{The top eigenvalues of $M(\xi)$}\label{PF1}
\end{figure}
\begin{figure}
\includegraphics[width=8cm]{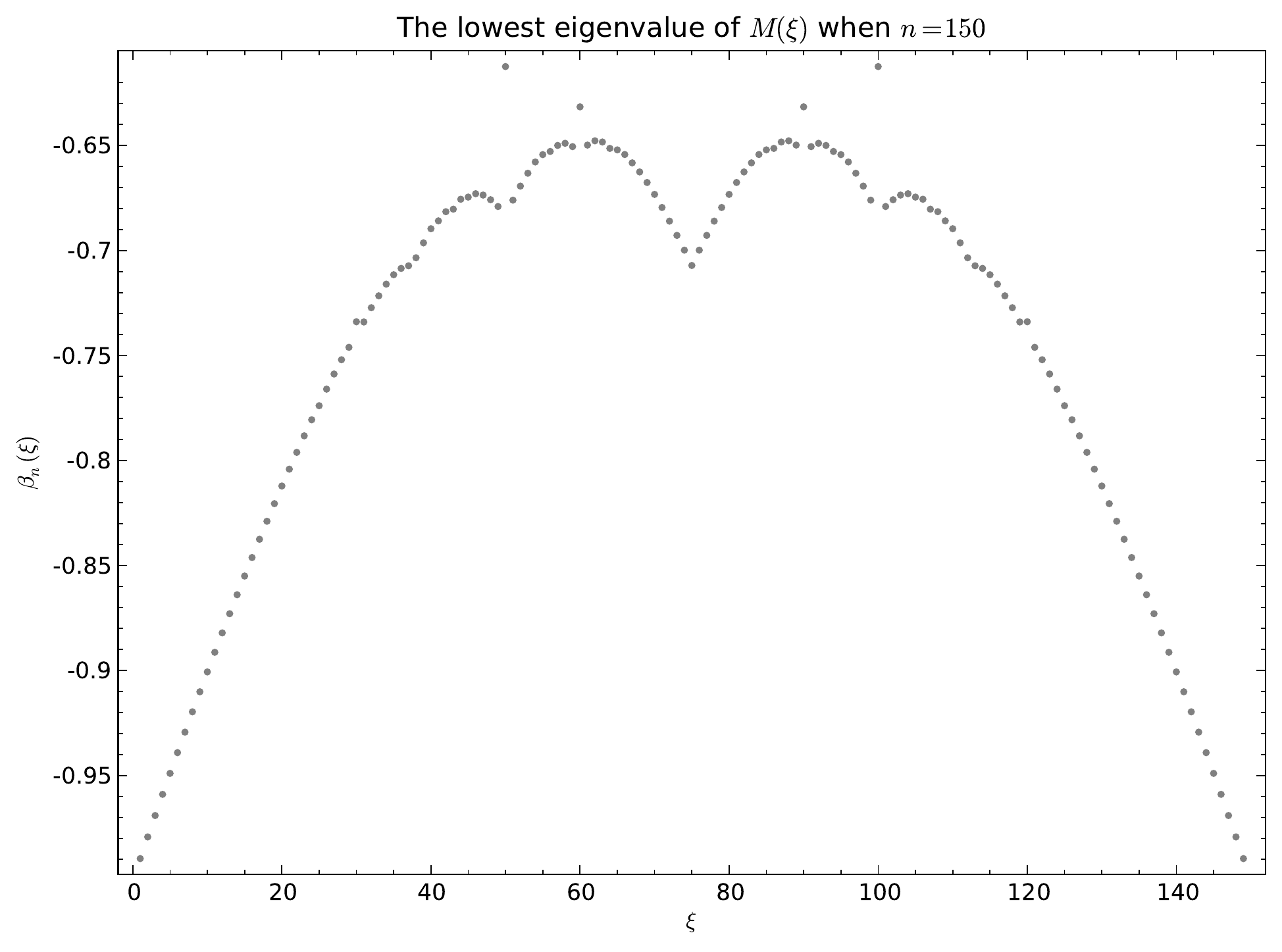}
\caption{The lowest eigenvalues of $M(\xi)$}\label{PF2}
\end{figure}
\begin{figure}
\includegraphics[width=8cm]{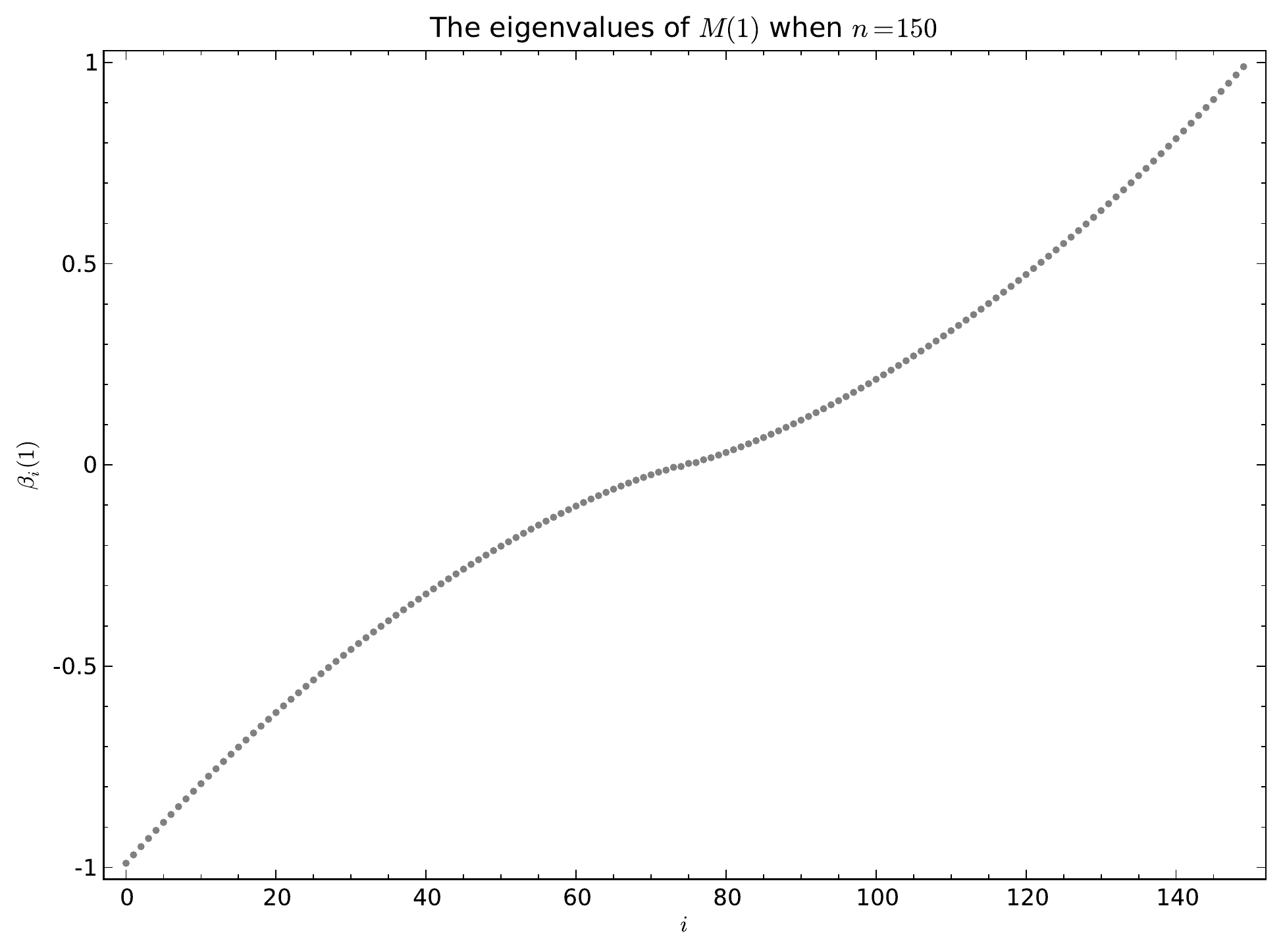}
\caption{All the eigenvalues of $M(1)$}\label{PF3}
\end{figure}
    \subsection{Symmetries and Inclusions} \label{PS4.1}  While many of the properties proved in this section are quite robust, the following more special properties are needed.  For positive integers $n$, $\xi$ and any real $\alpha$, let $D_{n,\xi,\alpha}$ denote the $n\times n$ diagonal matrix with diagonal entries:
     $$D^{(j)}_{n,\xi,\alpha}=\cos\left(\frac{2\pi \xi}{n}(\alpha+j)\right)\,\,\, 0\leq j \leq n-1.$$
     Let $M_{n,\xi,\alpha}=\frac{1}{2}(D_{n,\xi,\alpha}+\mathcal{P})$) where $\mathcal{P}$ is the transition matrix of a simple random walk on $\mathbb{Z}/n\mathbb{Z}$ ($\frac{1}{2}$ on the diagonals just above and below the main diagonal and $\frac{1}{2}$ in the upper right and lower left corners.)  Thus $M_{n,\xi,0}=M(\xi)$.  Let $S(n,\xi,\alpha)$ be the spectrum of $M_{n,\xi,\alpha}$.
     \begin{proposition}\label{PP4.1} \begin{enumerate}[(a)]
     \item\label{PIA} $S(n,\xi,0)=S(n,n-\xi,0)$
     \item \label{PIB} For all positive integer $k$, $S(n,\xi,\alpha)\subseteq S(kn,k\xi,\alpha)$
     \item \label{PIC} If $n$ is even, $S(n,\xi, \alpha)=-S\left(n,\xi,\alpha +\frac{n}{2\xi}\right)$
     \item \label{PID}  If $n$ is odd, $S(n,\xi, \alpha)\subseteq -S\left(2n,2\xi,\alpha +\frac{n}{2\xi}\right)$
     \end{enumerate}
     \end{proposition}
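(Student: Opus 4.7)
The plan is to handle the four claims separately, with each one reducing to an elementary identity or to conjugation by a suitable diagonal matrix; (d) will fall out of (b) and (c) combined.

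For (a), I would simply observe that $\cos\!\bigl(\tfrac{2\pi(n-\xi)j}{n}\bigr)=\cos\!\bigl(2\pi j-\tfrac{2\pi\xi j}{n}\bigr)=\cos\!\bigl(\tfrac{2\pi\xi j}{n}\bigr)$, using that cosine is even and $2\pi$-periodic. Hence $D_{n,n-\xi,0}=D_{n,\xi,0}$, the matrices $M_{n,n-\xi,0}$ and $M_{n,\xi,0}$ are literally equal, and the spectra coincide.

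For (b), the idea is periodization. Given an eigenvector $v:\mathbb{Z}/n\mathbb{Z}\to\mathbb{C}$ with $M_{n,\xi,\alpha}v=\lambda v$, lift it to $w:\mathbb{Z}/(kn)\mathbb{Z}\to\mathbb{C}$ by $w(j)=v(j\bmod n)$. Since $\tfrac{2\pi(k\xi)}{kn}(\alpha+j)=\tfrac{2\pi\xi}{n}(\alpha+j)$ and $\tfrac{2\pi\xi}{n}$ advances by an integer multiple of $2\pi$ when $j$ advances by $n$, the diagonal entry of $M_{kn,k\xi,\alpha}$ at index $j$ equals that of $M_{n,\xi,\alpha}$ at index $j\bmod n$. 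The off-diagonal part of $M_{kn,k\xi,\alpha}$ is the symmetric nearest-neighbor walk on $\mathbb{Z}/(kn)\mathbb{Z}$, which on periodic sequences acts as the walk on $\mathbb{Z}/n\mathbb{Z}$. Therefore $M_{kn,k\xi,\alpha}w=\lambda w$, so $\lambda\in S(kn,k\xi,\alpha)$.

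For (c), I would use the sign-alternation trick. Note that $\cos\!\bigl(\tfrac{2\pi\xi}{n}(\alpha+\tfrac{n}{2\xi}+j)\bigr)=\cos\!\bigl(\tfrac{2\pi\xi}{n}(\alpha+j)+\pi\bigr)=-\cos\!\bigl(\tfrac{2\pi\xi}{n}(\alpha+j)\bigr)$, so $D_{n,\xi,\alpha+n/(2\xi)}=-D_{n,\xi,\alpha}$. Thus $-M_{n,\xi,\alpha+n/(2\xi)}=\tfrac12(D_{n,\xi,\alpha}-\mathcal{P})$. It remains to show that $\tfrac12(D+\mathcal{P})$ and $\tfrac12(D-\mathcal{P})$ are isospectral. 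Let $J=\operatorname{diag}(1,-1,1,-1,\dots,(-1)^{n-1})$. Conjugation by $J$ fixes $D$ and flips signs on entries $(i,j)$ with $i-j$ odd; on the nearest-neighbor part $\mathcal{P}$, this flips all interior sub/super-diagonal entries, and at the two corner entries (positions $(0,n-1)$ and $(n-1,0)$) it multiplies by $(-1)^{n-1}$. Here is where evenness of $n$ is essential: for $n$ even, $(-1)^{n-1}=-1$, so $J\mathcal{P}J^{-1}=-\mathcal{P}$ exactly, yielding $J\,\tfrac12(D+\mathcal{P})\,J^{-1}=\tfrac12(D-\mathcal{P})$ and identical spectra. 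The main (and only) delicate point in the whole proposition is this boundary computation; for $n$ odd, the corner signs are wrong, which is precisely why (d) needs a detour.

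For (d), I would simply chain (b) and (c). By (b) with $k=2$, $S(n,\xi,\alpha)\subseteq S(2n,2\xi,\alpha)$. Since $2n$ is even, (c) applied at parameter $(2n,2\xi,\alpha)$ with shift $\tfrac{2n}{2(2\xi)}=\tfrac{n}{2\xi}$ gives $S(2n,2\xi,\alpha)=-S(2n,2\xi,\alpha+\tfrac{n}{2\xi})$, completing the inclusion. This proves the proposition.
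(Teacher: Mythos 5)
Your proof is correct and follows essentially the same route as the paper's: the cosine identity for (a), periodization of eigenvectors for (b), the alternating-sign trick for (c), and chaining (b) with (c) at $k=2$ for (d). Your part (c) is a conjugation-by-$J$ reformulation of the paper's change of variables $\phi(j)=(-1)^j\psi(j)$, and you are in fact a bit more explicit than the paper about the corner entries of $\mathcal{P}$, which is exactly where the hypothesis that $n$ is even enters.
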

     \begin{proof}  For (\ref{PIA}), $$\cos\left(\frac{2\pi (n-\xi)}{n}j\right)=\cos\left(2\pi j-\frac{2\pi \xi}{n}j\right)=\cos\left(\frac{2\pi \xi}{n}j\right).$$  Thus $M(n,\xi,0)=M(n,n-\xi,0)$.
     For (\ref{PIB}), if $\phi$ is an eigenvector of $M_{n,\xi,\alpha}$ then juxtaposing it $k$ times gives an eigenvector of $M_{kn,k\xi,\alpha}$ with the same eigenvalue because of periodicity.
For (\ref{PIC}), suppose $\psi$ is an eigenvector of $M_{n,\xi,\alpha}$ with associated eigenvalue $\theta$.  Thus:
$$\frac{1}{4}(\psi(j+1)+\psi(j-1))+1/2\cos\left(\frac{2\pi \xi}{n}(\alpha+j)\right)\psi(j)=\theta\psi(j)$$
Set $\phi(j)=(-1)^j\psi(j)$.  Then
$$\frac{1}{4}(\phi(j+1)+\phi(j-1))-\frac{1}{2}\cos\left(\frac{2\pi \xi}{n}(\alpha+j)\right)\phi(j)=-\theta \psi(j)$$
but $-\cos(\frac{2\pi \xi}{n}(\alpha+j)=\cos\left(\frac{2\pi \xi}{n}\left((\alpha+j+\frac{n}{2\xi}\right)\right)$.

For (\ref{PID}), properties (\ref{PIB}) and (\ref{PIC})  imply that $$S(n,\xi,\alpha)\subseteq S(2n,2\xi,\alpha)=-S\left(2n,2\xi,\alpha+\frac{n}{2\xi}\right).$$
     \end{proof}
\begin{remark}
  The operator with 1s above and below the diagonal (and in the corners) and $V\cos(\frac{2\pi \xi}{n}(j+\alpha))$ with $V>0$ a constant is a discrete version of the ``almost Mathieu operator'' of solid state physics. See \cite{ZMK}.    The discrete operator corresponds to $4M(n,\xi,\alpha)$ when $V=2$.  This is well known to be the critical case in the theory.  The $\xi$'s of physical interest scale as $O(n)$, which corresponds to the case when $\mathbb{Z}_n$ is replaced by $\mathbb{Z}$. When $V>2$ the eigenfunctions are localized.  For $V<2$, they are spread out.  The spectrum when $V=2$ is called ``Hofstadter's Butterfly.''  Its spectral properties have recently been observed in graphene \cite{Petal}.  All our $\frac{\xi}{n}$ ratios are rational.   It is a fascinating question if the limit as $\frac{\xi}{n}$ tends to an irrational has limiting spectrum matching the spectrum of Harper's operator on $\ell^2$.  See \cite{ZMK}  for further discussion.
  \end{remark}
    \subsection{Dirichlet Path Arguments} \label{PS4.2}  For general $n$, $M(\xi)=\hat{Q}(0,0,\xi)$ is an $n\times n$ matrix with $\frac{1}{2}\cos\left(\frac{2\pi \xi j}{n}\right)$ on the main diagonal, $\frac{1}{4}$ on the super and subdiagonals and in the upper right and lower left corners.      Then $\frac{1}{2}I+M(\xi)$ has non-negative entries with row sums in $[\frac{1}{2},\frac{3}{2}]$.  
    Finally, $L_\xi=\frac{1}{3}I+\frac{2}{3}M(\xi)$ is a substochastic matrix with $\frac{1}{6}$ on the super and subdiagonals (and the corners) and $\frac{1}{3}\left(1+\cos\left(\frac{2\pi \xi j}{n}\right)\right)$ on the main diagonal for $0\leq j\leq n-1$.
    The row sums are between $\frac{1}{3}$ and 1.  Form an $(n+1)\times(n+1)$ stochastic matrix $K_\xi$ by adding an absorbing state $\infty$,     with transitions from $j$ to $\infty$,
     $K_\xi(j,\infty)=\frac{1}{3}\left(1-\cos\left(\frac{2\pi \xi j}{n}\right)\right)$. Set $K(\infty,\infty)=1$.  Bounds on the eigenvalues of $L_\xi$ (and so $M(\xi)$) give bounds on the Dirichlet eigenvalues of $K_\xi$.
     Use the notation of \cite{DM} Section 4.2 with $S=\{0,1,\dots,n-1\}$, $\overline{S}=S\cup \left\{\infty\right\}$.  The Markov chain $K_\xi$ restricted to $S$ is connected.  Let $U(j)=1/n$, $0\leq j\leq n-1$ with 
     $$\ell^2(U)=\left\{f:\{0,1,\dots,n-1\}\rightarrow \mathbb{R}\right\}$$ 
having inner product $\langle f|g \rangle_U=\sum_{j\in S} f(j)g(j)U(j)$.  As required, for all $x,y \in S$, $U(x)K_\xi(x,y)= U(y)K_\xi(y,x)$.  When needed, functions $f$ on $S$ are extended to $\overline{S}$ as $f(\infty)=0$. (Also set $U(\infty)=0$.)  Define a Dirichlet form $\mathcal{E}$ on $\ell^2(U)$ by
$$\mathcal{E}(f|f)=\frac{1}{2}\sum_{x,y\in \overline{S}} (f(x)-f(y))^2U(x)K_\xi(x,y).$$
Let $\beta=\beta(\xi)$ be the largest eigenvalue of $L_\xi$.  From \cite{DM} Lemma 19, if there is a constant $A> 0$ such that $\|f\|_U^2\leq A\|\mathcal{E}(f)\|$, for all $f\in \ell^2(U)$, then $\beta\leq 1-\frac{1}{A}$.  This shows that the largest eigenvalue of $M(\xi)$ is at most $1-\frac{3}{2 A}$.

Geometric arguments require a choice of paths $\gamma_x$ starting at $x\in S$ to $\infty$, with steps which are possible with respect to $K_\xi$.  Thus
  $\gamma_x=(x_0=x,x_1,x_2,\dots,x_d=\infty)$ with $K_\xi(x_i,x_{i+1})>0$ for $0\leq i\leq d-1$.  Let $d=|\gamma_x|$ denote the length of this path.  From \cite{DM} Proposition 2c,
  \begin{align}\label{Aeq2}
  \beta\leq 1- \frac{1}{A} \text{ with } A=\max_{x\in S,y\in \overline{S}} \frac{2}{K_\xi(x,y)}\sum_{z\in S,\,(x,y)\in \gamma_z}|\gamma_z|.
  \end{align}
  
  The bounds are better if $A$ is `small.'  From (\ref{Aeq2}) this happens if both the paths are chosen so that no edge occurs too often and the $(x,y)$ edges that appear don't have small value of $K_\xi(x,y)$.
  With these preliminaries we next prove:
  \begin{proposition}\label{PP1}There exists a positive constant $\theta$ such that for all $n$ and $1\leq\xi\leq \frac{n}{\log{n}}$, the largest eigenvalue $\beta(\xi)$ of $M(\xi)$ satisfies $\beta(\xi)\leq 1-\theta\left(\frac{\xi}{n}\right)^{4/3}$.
  \end{proposition}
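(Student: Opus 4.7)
The plan is to establish a Poincar\'e inequality $\|f\|_U^2 \leq A\,\mathcal{E}(f|f)$ for all $f$ with $f(\infty) = 0$, with $A = O((n/\xi)^{4/3})$; this gives $\beta(L_\xi) \leq 1 - 1/A$ and then $\beta(\xi) \leq 1 - \theta(\xi/n)^{4/3}$ via $M(\xi) = \tfrac{3}{2}(L_\xi - \tfrac{1}{3}I)$. A direct expansion gives $\mathcal{E}(f|f) = \frac{1}{6n}\sum_k(f(k+1)-f(k))^2 + \frac{2}{3n}\sum_j f(j)^2 V(j)$ where $V(j) := \sin^2(\pi\xi j/n)$. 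Although the excerpt frames the problem via the canonical path bound (\ref{Aeq2}), a single shortest path from each $x$ to its nearest ``decent'' exit gives only $A = O((n/\xi)^2)$; the refinement needed is a flow version of (\ref{Aeq2}), or equivalently a one-dimensional Poincar\'e inequality on each ``bad'' interval, that balances the gradient and potential contributions of $\mathcal{E}$.

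Set $t = \xi/n$ and fix a threshold $\eta \in (0, 1/2]$ to be optimized. Partition $S = G_\eta \cup B_\eta$ with $G_\eta = \{j : V(j) \geq \eta\}$. Since $V$ has $\xi$ local minima on $\mathbb{R}$ (at $x = kn/\xi$, $k=0,\ldots,\xi-1$) and vanishes to second order at each, the discrete set $B_\eta$ is a union of at most $\xi$ clusters of consecutive integers, each of length at most $L_B = C\sqrt\eta/t$; the hypothesis $\xi \leq n/\log n$ keeps $L_B$ much smaller than the cluster spacing $n/\xi$, so clusters are well-separated. On $G_\eta$ the pointwise inequality $f(j)^2 \leq \eta^{-1} f(j)^2 V(j)$ gives $\sum_{G_\eta} f^2 \leq \eta^{-1}\sum_{j \in S} f^2 V$. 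For each bad cluster $I$ with good boundary point $a_I$, telescoping and Cauchy--Schwarz yield $f(x)^2 \leq 2 f(a_I)^2 + 2 L_I \sum_{k \in I}(f(k+1)-f(k))^2$. Summing over $x \in I$, then over clusters (using that each good boundary point is shared by at most two clusters):
$$\sum_{j \in S} f(j)^2 \leq (1 + 4 L_B)\,\eta^{-1} \sum_{j \in S} f(j)^2 V(j) + 2 L_B^2 \sum_k (f(k+1)-f(k))^2.$$

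Comparing with $\mathcal{E}(f|f)$ yields $A = O(\max(L_B/\eta,\, L_B^2))$. With $L_B \asymp \sqrt\eta/t$, the potential coefficient $L_B/\eta \asymp 1/(\sqrt\eta\, t)$ and the gradient coefficient $L_B^2 \asymp \eta/t^2$ balance at $\eta \asymp t^{2/3}$, with common value $t^{-4/3}$. Hence $A = O((n/\xi)^{4/3})$ and $\beta(\xi) \leq 1 - \theta(\xi/n)^{4/3}$ for an absolute $\theta > 0$. The main obstacle is the interval Poincar\'e step: one must reabsorb the boundary values $f(a_I)^2$ into $\eta^{-1}\sum_j f^2 V$ without picking up a factor equal to the number of clusters (up to $\xi$), and one must recognize that the correct scaling is the asymmetric fractional power $\eta \asymp t^{2/3}$, which is precisely what produces the $4/3$ exponent in the final bound.
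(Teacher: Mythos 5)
Your proposal is correct and reaches the right bound, but it takes a genuinely different route from the paper. The paper stays inside the canonical-path framework of (\ref{Aeq2}): it fixes $x^*=\lfloor(n/\xi)^{2/3}\rfloor$ and routes each vertex $r$ near a cosine minimum \emph{horizontally} to the staggered exit $r+x^*$ (rather than to the nearest decent exit), then straight to $\infty$. The staggering ensures that each exit edge $(j,\infty)$ carries $O(1)$ paths, while the horizontal edges carry $O(x^*)$ paths of length $O(x^*)$. The two contributions to $A$ become $(x^*)^2$ and $(n/\xi)^2/x^*$, and the choice $x^*\asymp(n/\xi)^{2/3}$ balances them at $(n/\xi)^{4/3}$. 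So your remark that ``a single shortest path to the nearest decent exit gives only $(n/\xi)^2$, and a flow version of (\ref{Aeq2}) is needed'' is half right: the naive shortest-path routing does fail, but the fix in the paper is still canonical paths, merely with spread-out exit points; no flow generalization is invoked. What you propose instead --- a direct Poincar\'e inequality, splitting $S$ into the good set $\{V\ge\eta\}$ and bad clusters of length $L_B\asymp\sqrt\eta/t$, using the pointwise bound $f^2\le\eta^{-1}f^2V$ on the good set and a 1D Poincar\'e (telescoping plus Cauchy--Schwarz) on each bad cluster, then reabsorbing the boundary values into the potential term and optimizing $\eta\asymp t^{2/3}$ --- is sound and yields the same $A=O((n/\xi)^{4/3})$. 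It is arguably more transparent than the path argument in revealing where the $4/3$ exponent comes from (balancing $L_B/\eta$ against $L_B^2$ is exactly the same optimization the paper hides in the choice of $x^*$), at the cost of needing the explicit reabsorption step, which you identify and handle correctly (each good boundary point is shared by at most two clusters, and the hypothesis $\xi\le n/\log n$ keeps clusters separated). Two small points: the coefficient of the potential term in $\mathcal E$ is $\tfrac{1}{3n}$, not $\tfrac{2}{3n}$ (this only changes the absolute constant $\theta$), and for the statement ``for all $n$'' one should note that when $n$ is bounded the claim is trivial since $\beta(\xi)<1$ strictly.
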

  \begin{figure}
\begin{center}
\includegraphics[width=9cm]{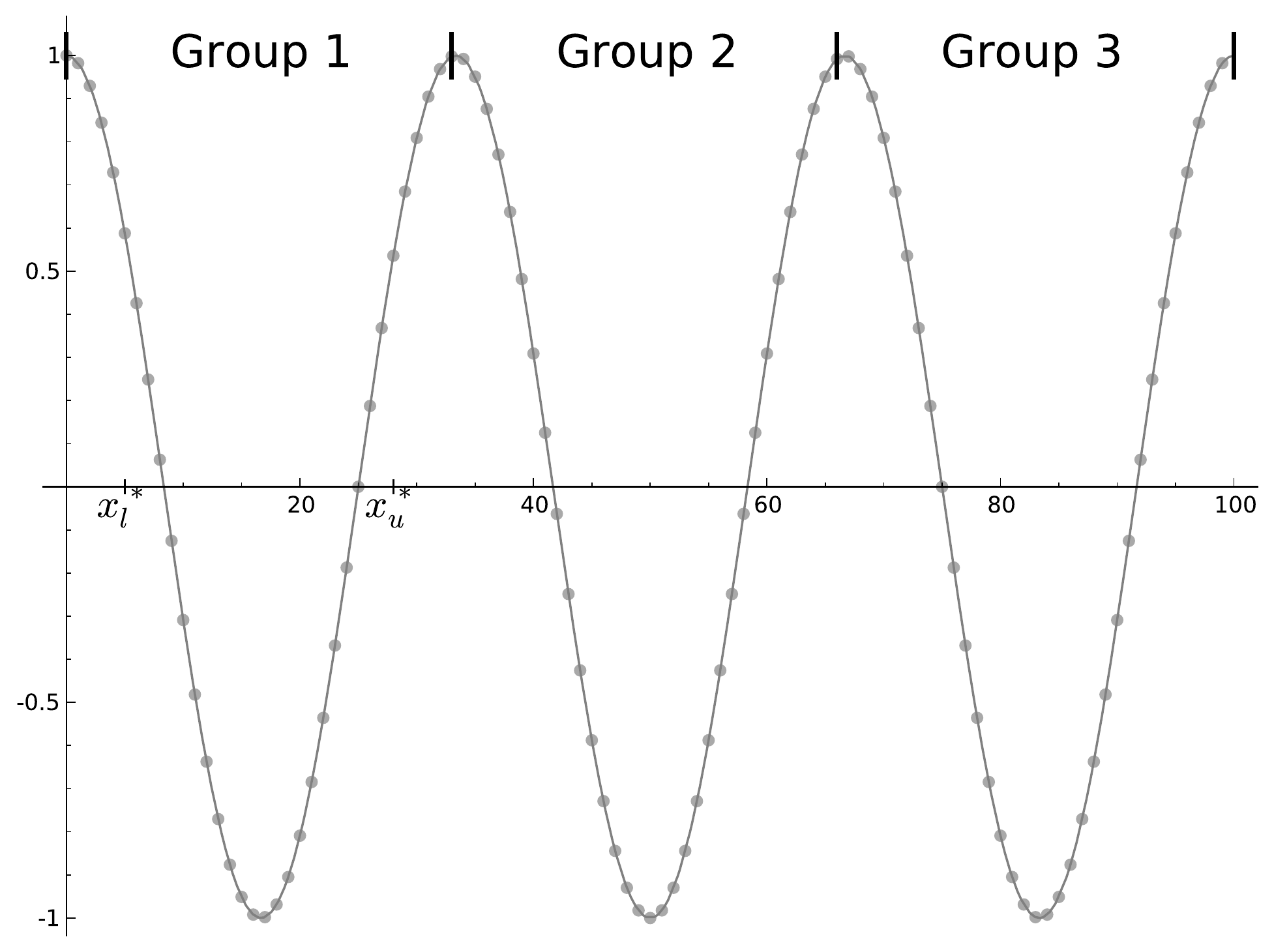}
\end{center}
\caption{Splitting points into groups according to the cycles of cosine.  Note the values marked $x_l^*$ and $x_u^*$.}\label{Fig:cos}
\end{figure}
 \begin{proposition}\label{PP2} For $\frac{n}{\log(n)}\leq \xi\leq \frac{n}{2}$,
 $$\beta(\xi)\leq 1-\frac{3}{4}\left(\frac{\xi}{n}\right)^2.$$
 \end{proposition}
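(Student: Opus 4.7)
The plan is to apply the same Dirichlet path framework developed in Section~\ref{PS4.2} for Proposition~\ref{PP1}, but with paths tuned to the rapid oscillations of the diagonal when $\xi$ is comparable to $n$. Recall that the largest eigenvalue of $M(\xi)$ is bounded by $1 - \tfrac{3}{2A}$, where $A$ is the constant from~(\ref{Aeq2}) applied to the absorbing Markov chain $K_\xi$ on $\overline{S} = S \cup \{\infty\}$. The target is therefore to exhibit a choice of paths $\gamma_x$ with $A \leq 2(n/\xi)^2$, which translates directly into the stated bound $\beta(\xi) \leq 1 - \tfrac{3}{4}(\xi/n)^2$.

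The essential feature of the range $n/\log n \leq \xi \leq n/2$ is that the diagonal cosines oscillate with period $r := \lceil n/\xi\rceil$ satisfying $2 \leq r \leq \lceil\log n\rceil$. Since the angle step $2\pi\xi/n$ is at most $\pi$, in any window of $r+1$ consecutive indices there is at least one $y^*$ with $\cos(2\pi\xi y^*/n) \leq -\cos(\pi\xi/n) \leq 0$, so that $K_\xi(y^*,\infty) = \tfrac{1}{3}(1 - \cos(2\pi\xi y^*/n)) \geq \tfrac{1}{3}$. This is a constant-order killing probability, available at a positive density of sites.

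The construction is then: partition $\{0,1,\ldots,n-1\}$ cyclically into consecutive blocks of length at most $r$, choose in each block a target $y^*$ near its middle as above, and let $\gamma_x$ be the shortest cyclic path from $x$ to the target of its block, followed by the killing edge to $\infty$. Each path has length at most $\lfloor r/2\rfloor + 1$, each internal edge of $S$ is used by at most $\lfloor r/2\rfloor$ paths, and each killing edge $(y^*,\infty)$ by at most $r$. Plugging into~(\ref{Aeq2}) with edge conductances $K_\xi(j,j\pm 1) = \tfrac{1}{6}$ and $K_\xi(y^*,\infty) \geq \tfrac{1}{3}$ immediately yields $A = O(r^2) = O((n/\xi)^2)$, and hence $\beta(\xi) \leq 1 - \Omega((\xi/n)^2)$.

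The main obstacle is extracting the sharp constant $\tfrac{3}{4}$: the crude path count gives only a smaller numerical prefactor. To reach $\tfrac{3}{4}$, I would exploit three refinements: (i) that at most of the targets $\cos(2\pi\xi y^*/n)$ is actually close to $-1$ (so $K_\xi(y^*,\infty)$ is closer to $\tfrac{2}{3}$ than to $\tfrac{1}{3}$), which fails only in a thin range of $\xi$ near $n/2$ that must be handled separately; (ii) balancing the outgoing paths symmetrically on each side of each target, so that no internal edge is traversed by more than roughly $r/2$ paths and path lengths decrease linearly as one approaches $y^*$; and (iii) using the hypothesis $\xi \geq n/\log n$ to absorb $O(r) = O(\log n)$ correction terms into the $O(r^2)$ principal term. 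The geometric content remains the simple observation that for $\xi$ comparable to $n$, paths to a killing state can be chosen of length only $O(n/\xi)$.
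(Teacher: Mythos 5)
Your overall strategy---the Dirichlet-path machinery of Section~\ref{PS4.2} applied to the killed chain $K_\xi$, partitioning $\{0,\dots,n-1\}$ into blocks of length $r\approx n/\xi$ and routing each block to a killing state with nonpositive diagonal (so $\tfrac{2}{K_\xi(y^*,\infty)}\le 6$)---is the same as the paper's. But the specific path geometry differs, and as you note yourself, your construction does not deliver the stated constant as written: routing every point of a block through a \emph{single} central target $y^*$ makes the killing edge $(y^*,\infty)$ a bottleneck carrying all $\sim r$ paths, and your crude count (uniform length bound $\lfloor r/2\rfloor+1$) gives only $A\le 3(n/\xi)^2$, hence $\beta\le 1-\tfrac12(\xi/n)^2$, short of the claimed $\tfrac34$.

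The paper's device for getting the constant cleanly is different and worth contrasting. It sets $x_l^*=\tfrac14\lfloor n/\xi\rfloor$ and connects each $j\in[0,x_l^*)$ rightward to the \emph{distinct} killing state $x_l^*+j$ (which lies in the quarter-cycle where $\cos\le0$), symmetrically from the right, and middle points directly to $\infty$. Each killing edge then carries only $O(1)$ paths, so the bottleneck shifts to the interior edge $(x_l^*-1,x_l^*)$, traversed by $x_l^*$ paths of length $\approx x_l^*$; this gives $A\le 12(x_l^*)^2+O(n/\xi)=\tfrac34(n/\xi)^2+O(n/\xi)$, comfortably under the required $2(n/\xi)^2$. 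Your refinement~(ii)---replacing the uniform length bound with the true, linearly decreasing lengths---would bring your $A$ down to roughly $\tfrac32(n/\xi)^2+O(n/\xi)$ and so would also clear the bar, but you leave it as a sketch, and the boundary case $\xi$ near $n/2$, where $r=O(1)$ and the $O(n/\xi)$ error term is of the same order as the principal term, still needs a direct check. Refinement~(i) slightly misdiagnoses the difficulty: the obstruction near $\xi=n/2$ is that $r$ is small and the asymptotics degenerate, not that the favorable sign is rare. The paper's spread-targets trick sidesteps both issues because the killing edges never become the maximizing edge.
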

 \begin{proof}[Proof of Proposition \ref{PP1}]
For $1\leq \xi\leq \frac{n}{\log{n}}$, let $x^*=\left\lfloor \left(\frac{n}{\xi}\right)^{\frac{2}{3}}\right\rfloor$.  Break the $\xi$ cycles of $\cos\left(\frac{2\pi \xi j}{n}\right)$ at $\lfloor \frac{n}{\xi}\rfloor,2\lfloor\frac{n}{\xi}\rfloor,\dots$ into $\xi$ groups, eg.\ when $\xi=3$  breaking ties to the right.  See Figure \ref{Fig:cos}.
Each group has $\frac{n}{\xi}+O(1)$ points.  For each group, define a point $x_l^*$ by counting $x^*$ points from the left and $x_u^*$ by counting $x^*$ from the right of the groups.  Each group is treated identically and we here focus on the first group.  From the development above, for any $j$ between $x_l^*$ and $2 x_l^*$,
$$K_\xi(j,\infty)=\frac{1}{3}\left(1-\cos\left(\frac{2\pi \xi j}{n}\right)\right)\geq\frac{1}{3}\left(\frac{1}{2}\left(\frac{2\pi \xi x_l^*}{n}\right)^2+O\left(\frac{2\pi \xi x_l^*}{n}\right)^4\right)$$
Thus $\frac{2}{K_\xi(j,\infty)}\leq\theta\left(\left(\frac{n}{\xi x_l^*}\right)^2\right)(1+o(1))$ for an explicit constant $\theta$, independent of $\xi, j, n$.  For points $r$ between $1$ and $x_l^*-1$, connect $r$ to infinity by horizontal paths  $r, r+1,\dots,(r+x_l^*+1)$ going directly from there to $\infty$. See Figure \ref{AF2B}.
\tikzstyle{vertex}=[circle,fill=black,inner sep=0,minimum size=2mm]
\begin{figure}
\begin{tikzpicture}
\node[vertex] (vert_1) at (0,0) {};
\node[vertex] (vert_2) at (1,0) {} edge (vert_1);
\node[vertex] (vert_3) at (2,0) {};
\draw (vert_2) edge (vert_3);
\draw[dashed,thick] (vert_2.330) edge (vert_3.210);
\node[vertex] (vert_xm1) at (3,0) {} edge[loosely dotted] (vert_3);
\node[vertex] (vert_x) at (4,0) {} edge (vert_xm1);
\draw[dashed,thick] (vert_x.330) edge (vert_xm1.210);
\node[vertex] (vert_xp1) at (5,0) {} ;
\draw[dashed,thick] (vert_xp1) edge (vert_x);
\node[vertex] (vert_infty) at (5,1) {};
\node [below=.1cm] (1) at (vert_1) {$1$};
\node [below=.1cm] (2) at (vert_2) {$2$};
\node [below=.1cm] (x) at (vert_x) {$x_l^*$};
\node [below=.1cm] (xp1) at (vert_xp1) {$x_{l}^*+1$};
\node [right=.1cm] (infty) at (vert_infty) {$\infty$};
\draw[dashed,thick] (vert_xp1) edge (vert_infty);
\draw[thick] (vert_x) edge (vert_infty);
\end{tikzpicture}
\caption{The chosen path from $1$ to $\infty$ is given with solid lines, while the chosen path from $2$ to $\infty$ is pictured with dashed lines.}\label{AF2B}
\end{figure}
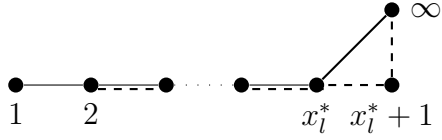

Do the symmetric thing with the rightmost $x^*-1$ points at the right of group 1, (moving leftward.)  Connect points from $x_l^*$ to $x_u^*$ directly to $\infty$ with paths of length 1.  Treat each of the other groups in a parallel fashion.

Bounds for $A$ in (\ref{Aeq2}) follow from bounds from the edges in Group 1.  For the edge $(r-1,r)$, order $x^*$ paths use this edge and each such path has length $x^*$.  Since $\frac{2}{K_\xi(r-1,r)}\leq\frac{2}{K_\xi(x_l^*-1,x_l^*)}=12$, the contribution from this edge is at most of order $(x^*)^2$.  Next consider edges $(j,\infty)$ with $j$ between $x_l^*$ and $2x_l^*$ or between $x_u^*-x^*$ and $x_u^*$.  At most two paths use such an edge, one of length $x^*$ (and the other of length one.)  The contribution here is of order $\left(\frac{n}{\xi x^*}\right)^2 x^*$. The exponent $\frac{2}{3}$ in our choices of $x^*$ was chosen to make $(x^*)^2$ and $\left(\frac{n}{\xi}\right)^2\frac{1}{x^*}$ of the same order.  The only other edges used are $(j,\infty)$ with $j$ from $2x_l^*$ to $2x_u^*$.  These are used once for paths of length 1 and $\frac{1}{K(j,\infty)}$ is at most of order $\left(\frac{n}{\xi}\right)^{2/3}$.  Combining bounds completes the proof of Proposition \ref{PP1}.
 \end{proof}
 \begin{proof}[Proof of Proposition \ref{PP2}]  Proceeding as above, break $0\leq j\leq n-1$ into groups of $\frac{n}{\xi}$ points (within 1).  Again, the groups are treated identically and we focus on the first group.  Let $x_l^*=\frac{1}{4}\left\lfloor\frac{n}{\xi}\right\rfloor$ and connect points of $0\leq j<x_l^*$ to points $x_l^*+j$ and then to $\infty$.  Since $\cos\left(\frac{2\pi}{n}\xi (x_l^*+j)\right)\leq 0$, $\frac{2}{K_\xi(x_l^*+j,\infty)}\leq 6.$  For $x_l^*$ to $x_u^*$, connect points directly to $\infty$.  For $j$ at the right of group 1, connect backward to $x_u^*-j$ and then to $\infty$ in symmetric fashion.  Now, $\frac{2}{K(j,\infty)}\leq 6$.  For all $x_l^*\leq j\leq x_u^*$ that occur in this choice of paths, these edges (from $j$ to $\infty$) have at most 3 paths through them, the longest path of length of order $\frac{n}{4\xi}$.  This gives $5\frac{n}{\xi}$ as an upper bound for those edges.  For edges $(j,j+1)$, $\frac{2}{K(j,j+1)}=12$. The maximizing such edge is $(x_{l}^*-1,x_l^*).$  This has $x_l^*$ paths using it of length $x^*$ contributing at most $12(x^*)^2=\frac{3}{4}\left(\frac{n}{\xi}\right)^2$.  Combining bounds completes the proof of Proposition \ref{PP2}.  
 \end{proof}
    \subsection{Lower bounds on Negative Eigenvalues} \label{PS4.3}  We derive crude but useful lower bounds on the negative eigenvalues of $M(\xi)$ by using the upper bounds from Section \ref{PS4.2} together with the inclusion from Proposition \ref{PP4.1}.
    Suppose that $n$ is odd (the case of interest for proving Theorem \ref{PT1}). From Proposition \ref{PP4.1} (\ref{PID}), the following spectral inclusion holds:
    $$S(n,\xi,0)\subseteq-S(2n,2\xi,\frac{n}{2\xi}).$$  Thus bounds on the smallest eigenvalues of $M(\xi)$ follow from bounds on the largest eigenvalues of $2n\times 2n$ matrices with diagonal $\cos\left(\frac{2\pi\xi j}{n}+\pi\right)$, $0\leq j\leq 2n-1$. First consider the case of $\xi=1$, the diagonal elements appear as in Figure \ref{AF1}.
\begin{figure}
\begin{center}
 \includegraphics[width=9cm]{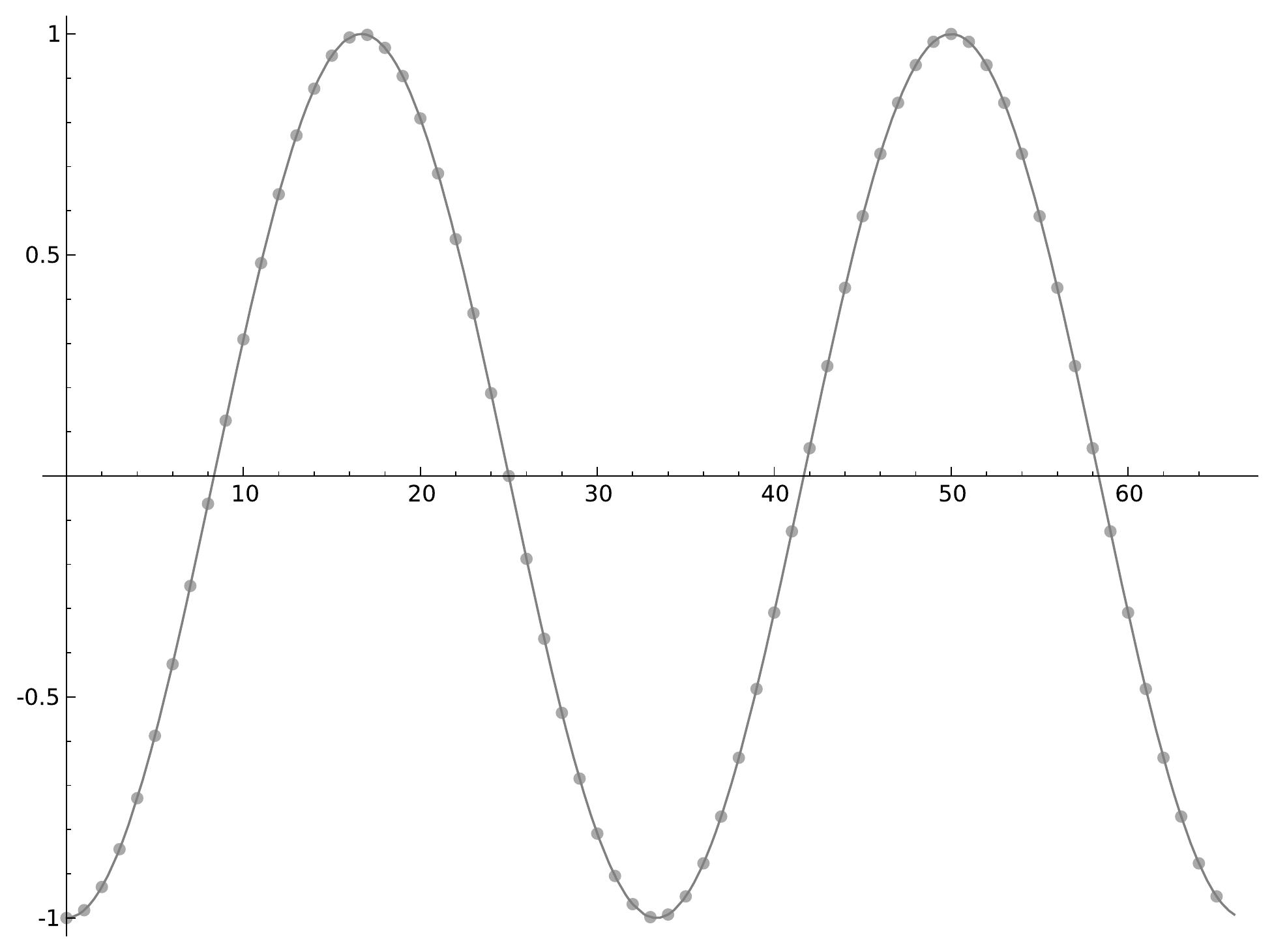}
\end{center}
\caption{Diagonal elements for $\xi=1$}\label{AF1}
\end{figure}
  A cyclic shift of the diagonal doesn't change the spectrum so we may consider the $2n\times 2n$ matrix with diagonal entries $\cos\left(\frac{\pi(2j+1)}{n}\right),\,\,0\leq j<2n-1$, as in Figure \ref{AF4}. 
\begin{figure}
\begin{center}
 \includegraphics[width=9cm]{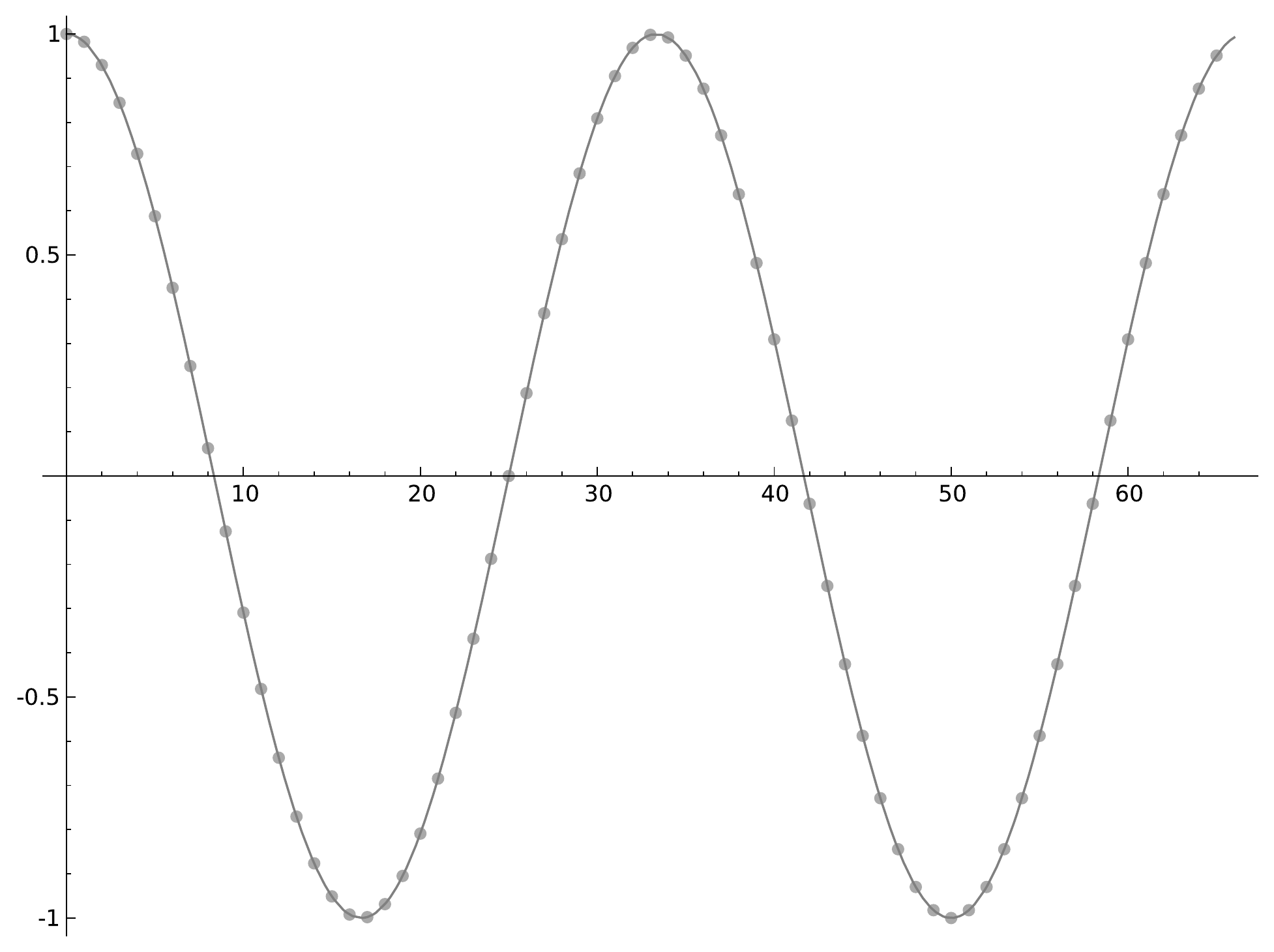}
\end{center}
\caption{A shift of the diagonal elements for $\xi=1$}\label{AF4}
\end{figure}
Now, the same paths and bounds used in Section \ref{PS4.2} can be used to get upper bounds.  We will not repeat the details but merely state the conclusions.
\begin{proposition}  There is a positive constant $\theta$ such that for all odd $n$ and $1\leq \xi\leq \frac{n}{\log(n)}$, the smallest eigenvalue $\beta_*(\xi)$ of $M(\xi)$ satisfies
$$\beta_*(\xi)\geq -1+O\left(\frac{\xi}{n}\right)^{4/3}.$$
\end{proposition}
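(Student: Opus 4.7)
The plan is to combine the spectral inclusion of Proposition \ref{PP4.1}(\ref{PID}) with the Dirichlet path technique of Section \ref{PS4.2}. For odd $n$, that inclusion reads $S(n,\xi,0) \subseteq -S(2n, 2\xi, n/(2\xi))$, so $-\beta_*(\xi)$ is an eigenvalue of $M_{2n, 2\xi, n/(2\xi)}$ and in particular $\beta_*(\xi) \geq -\beta_1$, where $\beta_1$ denotes the largest eigenvalue of that $2n \times 2n$ matrix. Proving $\beta_1 \leq 1 - \theta(\xi/n)^{4/3}$ for some universal $\theta > 0$ therefore suffices.

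The matrix $M_{2n,2\xi,n/(2\xi)}$ has diagonal entries $-\tfrac{1}{2}\cos(2\pi\xi j/n)$ for $0 \leq j \leq 2n-1$; the associated absorbing probabilities coming from the Section \ref{PS4.2} construction are $K(j,\infty) = \tfrac{1}{3}(1+\cos(2\pi\xi j/n))$. The crucial feature is that these are strictly positive for every integer $j$ whenever $n$ is odd, since $\cos(2\pi\xi j/n) = -1$ would require $2\xi j \equiv n \pmod{2n}$, impossible with the left side even and $n$ odd. The geometry is thus a mirror image of the one in Proposition \ref{PP1}: the $2n$ points split into $2\xi$ consecutive groups of $n/\xi$ points, one period of the cosine each, and now the small-$K$ locations sit at the \emph{centers} of the groups (rather than at the boundaries as in Proposition \ref{PP1}), with $K = 2/3$ at the cycle boundaries.

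The path construction of Proposition \ref{PP1} now works in mirror. With $x^* = \lfloor(n/\xi)^{2/3}\rfloor$, route each point within distance $x^*$ of a cycle center outward along a horizontal chain of length at most $x^*$ to a point at distance $x^*$ from that center\textemdash where $K = \Theta((\xi/n)^{2/3})$\textemdash and then to $\infty$; connect every other point directly to $\infty$ by a single edge. A verbatim rerun of the edge count from Proposition \ref{PP1} then shows that the horizontal edges (where $2/K = 12$), the chain-terminal exits, and the remaining single-path exits each contribute $O((n/\xi)^{4/3})$ to (\ref{Aeq2}); hence $A = O((n/\xi)^{4/3})$ and $\beta_1 \leq 1 - \theta(\xi/n)^{4/3}$. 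The only genuinely new step, and the main (modest) obstacle, is to verify that the half-period shift by $n/(2\xi)$ preserves strict positivity of $K$ at every integer\textemdash this is where the odd-$n$ hypothesis enters\textemdash and that the chain endpoints stay well clear of the worst integer (whose $K$ is only $\Theta(\xi^2/n^2)$, coming from a half-unit gap between $n/(2\xi)$ and its nearest integer). After those geometric facts are recorded, the computation is mechanical, as the paper's terse reference to ``the same paths and bounds'' suggests.
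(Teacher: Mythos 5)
Your proposal follows essentially the same route as the paper: Proposition~\ref{PP4.1}(\ref{PID}) gives $S(n,\xi,0)\subseteq -S\left(2n,2\xi,\frac{n}{2\xi}\right)$, and the Dirichlet path argument of Section~\ref{PS4.2} is then rerun on the $2n\times 2n$ matrix whose diagonal is the half-period-shifted cosine (the paper prefers a cyclic reindexing, writing the diagonal as $\cos\left(\frac{\pi(2j+1)}{n}\right)$ when $\xi=1$, whereas you work directly with the mirrored geometry in which the small-$K$ sites sit at group centers; the two are equivalent up to relabeling). Your explicit check that odd $n$ forces $K(j,\infty)>0$ at every integer $j$, and hence that the Dirichlet chain remains well-posed, is a correct and worthwhile observation the paper leaves implicit.
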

\begin{proposition}For $\frac{n}{\log({n})}\leq \xi<\frac{n}{2}$, $\beta_*(\xi)\geq -1+\frac{3}{4}\left(\frac{\xi}{n}\right)^2(1+o(1))$
\end{proposition}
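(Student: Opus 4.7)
The plan is to translate the lower bound on $\beta_*(\xi)$ into an upper bound on a largest eigenvalue via the spectral-inclusion dictionary of Proposition~\ref{PP4.1}, and then to invoke the Dirichlet path argument of Proposition~\ref{PP2} on the resulting auxiliary matrix essentially verbatim.

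\medskip
First, assume $n$ is odd (the even case follows similarly from Proposition~\ref{PP4.1}(\ref{PIC})). Proposition~\ref{PP4.1}(\ref{PID}) with $\alpha=0$ gives $S(n,\xi,0)\subseteq -S(2n,2\xi,n/(2\xi))$, so it suffices to bound the largest eigenvalue $\lambda$ of the $2n\times 2n$ matrix $M':=M_{2n,2\xi,n/(2\xi)}$. Its $j$th diagonal entry is
\[\tfrac{1}{2}\cos\!\bigl(\pi+\tfrac{2\pi\xi j}{n}\bigr)=-\tfrac{1}{2}\cos\!\bigl(\tfrac{2\pi\xi j}{n}\bigr),\qquad 0\leq j\leq 2n-1,\]
with the same cyclic $\tfrac{1}{4}$ super/subdiagonal pattern as the matrices $M(\cdot)$. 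Doubling the dimension is exactly what turns the half-period phase shift $n/(2\xi)$ (which need not be integral mod $n$ when $n$ is odd) into an integer number of index positions mod $2n$, so after a cyclic relabelling, $M'$ has the same combinatorial shape as an $M(\cdot)$ matrix at dimension $2n$ and frequency $2\xi$, but with the diagonal cosine shifted by $\pi$.

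\medskip
Next, I would repeat the proof of Proposition~\ref{PP2} on $M'$. Set $L':=\tfrac{1}{3}I+\tfrac{2}{3}M'$ and extend to an absorbing chain $K'$ on $\{0,\ldots,2n-1\}\cup\{\infty\}$ with $K'(j,\infty)=\tfrac{1}{3}\bigl(1+\cos(2\pi\xi j/n)\bigr)$; the escape probability is now large precisely on the constant fractions of each period where $\cos(2\pi\xi j/n)$ is close to $1$ rather than $-1$, but the geometry is identical. Partition $\{0,\ldots,2n-1\}$ into $2\xi$ groups of $n/\xi+O(1)$ points (one per period of the shifted cosine), and within each group take $x_l^*=\tfrac{1}{4}\lfloor n/\xi\rfloor$. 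Route points in the left $x_l^*$ positions of a group rightward to $x_l^*+j$ and then to $\infty$, route points in the rightmost positions symmetrically, and route central points directly to $\infty$, exactly as in Proposition~\ref{PP2}. The same edge/path counting yields
\[A\;\leq\;\tfrac{4}{3}\,(n/\xi)^2\,(1+o(1)),\qquad\text{whence}\qquad \lambda\;\leq\;1-\tfrac{3}{4}\,(\xi/n)^2(1+o(1)).\]
Combining with the reduction of the first paragraph gives $\beta_*(\xi)\geq -\lambda\geq -1+\tfrac{3}{4}(\xi/n)^2(1+o(1))$, as claimed.

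\medskip
The main point of care is bookkeeping: Proposition~\ref{PP2} applied at dimension $2n$ and frequency $2\xi$ produces a factor $(2\xi/(2n))^2=(\xi/n)^2$, so no spurious factor of $2$ appears and the constant $\tfrac{3}{4}$ survives intact. I also verify that the hypothesis range $n/\log n\leq \xi<n/2$ pushes into the range $2n/\log(2n)\leq 2\xi<n$ required by Proposition~\ref{PP2} on the doubled matrix, which it does since $\log(2n)>\log n$. I do not anticipate a substantive obstacle beyond this translation; the parallelism with Proposition~\ref{PP2} is presumably the reason the authors chose to omit the details.
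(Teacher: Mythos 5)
Your proposal matches the paper's intended argument exactly: invoke the spectral inclusion $S(n,\xi,0)\subseteq -S(2n,2\xi,n/(2\xi))$ from Proposition~\ref{PP4.1}(\ref{PID}) and then run the path argument of Proposition~\ref{PP2} on the resulting $2n\times 2n$ matrix with sign-flipped diagonal. The paper explicitly says it will ``not repeat the details but merely state the conclusions,'' and you have supplied precisely those omitted details (including the correct range check and the observation that $(2\xi/(2n))^2=(\xi/n)^2$ keeps the constant intact), so this is the same route, carried out correctly.
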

\begin{remark}  The case of even $n$ arises naturally as well.  This may be treated in parallel fashion using Proposition \ref{PP4.1} (\ref{PIC}) to reduce things to bound the largest eigenvalues of slightly shifted matrices as above.
\end{remark}      

\bibliographystyle{plain}
\bibliography{Heisenbergbib}
\end{document}